\numberwithin{equation}{section}
\theoremstyle{plain}
\newtheorem{Th}{Theorem}[section]
\newtheorem{Lemma}[Th]{Lemma}
\newtheorem{Cor}[Th]{Corollary}
\newtheorem*{Theorem-non}{The mean-square version of Theorem \ref{Theo1}}
\newtheorem*{Theorem-non2}{Theorem 4.8}
\newtheorem*{Theorem-non3}{Corollary 1.2}
 \theoremstyle{definition}
\newtheorem{Rem}[Th]{Remark}
\newtheorem{?}[Th]{Problem}
\begin{document}

\email{Jkim51@olemiss.edu}
\title{Applications of zero-free regions on averages and shifted convolution sums of Hecke eigenvalues
}
\author{Jiseong Kim}
\begin{abstract} 
By assuming Vinogradov-Korobov type zero-free regions and the generalized Ramanujan-Petersson conjecture, we establish nontrivial upper bounds for almost all short sums of Fourier coefficients of Hecke-Maass cusp forms for $SL(n,\mathbb{Z})$. As applications, we obtain nontrivial upper bounds for the averages of shifted sums involving coefficients of the Hecke-Maass cusp forms for $SL(n,\mathbb{Z})$. Furthermore, we present a conditional result regarding sign changes of these coefficients.
\end{abstract}

\address{University of Mississippi, Department of Mathematics}
\maketitle
\markboth{Jiseong Kim}{\text{Zero-free regions and averages over short intervals}}

\smallskip
\noindent \textbf{Keywords.} Automorphic form; Hecke eigenvalue; Shifted sum; Sign changes; Zero-free region
\section{Introduction}
\noindent There are many interesting results regarding the behavior of multiplicative functions that concern mean values within short intervals and their applications. For a recent overview of these results, we refer the reader to \cite{MR3} and \cite{GHS}.  The utilization of Hecke relations reveal that the normalized Fourier coefficients $\{\mathcal{A}_{F}(m,1,...,1)\}$ of Hecke-Maass forms for $SL(n,\mathbb{Z})$ (see Subsection 1.2) are multiplicative, and it is conjectured that these are divisor-bounded as indicated by the generalized Ramanujan-Petersson conjecture (see Lemma \ref{Lemma 3.1}). In addition, there have been  extensive research on analytic properties of automorphic L-functions, which is useful for studying the behavior of ${\mathcal{A}_{F}(m,1,...,1)}$. Given this background, one would seek to apply techniques from multiplicative number theory to automorphic L-functions. In this paper, we show that simple methods outlined
 in \cite{Matomaki1} and \cite{MR3} can be applied to study $\{\mathcal{A}_{F}(m,1,...,1)\}$ under the generalized Ramanujan-Petersson conjecture and the $SL(n,\mathbb{Z})$ Vinogradov-Korobov zero-free regions (see subsection \ref{GRP2}, \eqref{Zerofree} respectively). One of the main results in this paper is as follows.
\begin{Th}\label{Theo1} Assume the generalized Ramanujan-Petersson conjecture and the $SL(n,\mathbb{Z})$ Vinogradov-Korobov zero-free regions. Let $X$ be sufficiently large, and let $e^{ (\log X)^{1-\epsilon}} \ll_{\epsilon} h_{1} \leq h_{2} \ll_{\epsilon} X^{1-\epsilon}$ where $\epsilon>0$ is fixed small. Then
\begin{equation}\label{316}\frac{1}{X} \int_{X}^{2X} \left|\frac{1}{h_{1}}\sum_{m=x}^{x+h_{1}} \mathcal{A}_{F}(m,1,...,1)- \frac{1}{h_{2}}\sum_{m=x}^{x+h_{2}} \mathcal{A}_{F}(m,1,...,1)   \right|^{2}dx \ll_{F,\epsilon}  (\log X)^{-2/3+\epsilon}. \end{equation}
\end{Th}

As applications, we obtain nontrivial upper bounds for shifted convolution sums involving $\{\mathcal{A}_{F}(m,1,...,1)\}$ and investigate sign changes of them. Before discussing our results, let us review some relevant studies on holomorphic Hecke cusp forms.

\subsection{Holomorphic Hecke cusp forms}
Let $\mathbb{H}=\{z=x+iy : x \in \mathbb{R}, y \in (0,\infty)\}, G=SL(2,\mathbb{Z}).$ 
Define $j_{\gamma}(z)=(cz+d)$ where 
$\gamma= \left(\begin{matrix}
a & b \\
c & d 
\end{matrix} \right).$
A holomorphic function $f: \mathbb{H} \rightarrow \mathbb{C}$ is called a modular form of weight $k$ if it satisfies the modularity
\begin{equation}f(\gamma z)= j_{\gamma}(z)^{k}f(z)\nonumber\end{equation}
for all $\gamma \in SL(2,\mathbb{Z})$. It is well-known that $f(z)$ has a Fourier expansion at the cusp $\infty$ given by 
\begin{equation}
    f(z)=\sum_{n=0}^{\infty} b_{n}e(nz)
\end{equation}
where $e(z)=e^{2\pi iz},$ and the normalized Fourier coefficients $a(n)$ of $f(z)$ are defined by
\begin{equation}
a(n):=b_{n}n^{-\frac{k-1}{2}}.
\end{equation} \newline{}
The set of all modular forms of a fixed weight of $k$ forms a vector space, denoted by $M_k$. The set of modular forms belonging to $M_k$ that have a constant term of zero is denoted by $C_k$. For the details, see \cite[Chapter 14]{IK1}.
The $n$th Hecke operator $T_n$ on $C_k$ is defined by
$$
\left(T_n f\right)(z):=\frac{1}{n} \sum_{a d=n} a^k \sum_{b(\bmod d)} f\left(\frac{a z+b}{d}\right).
$$
\newline{}
It is well-known that there exists an orthonormal basis $S_k$ of $C_k$, consisting of eigenfunctions of all the Hecke operators $T_n$. These eigenfunctions are called  Hecke cusp forms. 
If $f$ is a Hecke cusp form, its eigenvalues  $\lambda_{f}(n)$ of the $n$th Hecke operator satisfy 
$$a(n)=a(1)\lambda_{f}(n),$$ and the function $n \mapsto \lambda_{f}(n)$ is multiplicative and real valued. 
For any Hecke cusp form $f,$ it is known that 
\begin{equation}
\sum_{n=1}^{X} \lambda_{f}(n) \ll_{f,\epsilon} X^{\frac{71}{192}+\epsilon} 
\end{equation}
for sufficiently large $X$ (see \cite{MJM}).
Therefore, when $x \in [X,2X],$ we have 
\begin{equation}\label{14} \begin{split} \sum_{n=x}^{x+h} \lambda_{f}(n) \ll \left|\sum_{n=1}^{x+h} \lambda_{f}(n)\right|+\left|\sum_{n=1}^{x} \lambda_{f}(n)\right| \ll_{f,\epsilon} x^{\frac{71}{192}+\epsilon} \end{split} \end{equation} for $1 \leq h \leq X.$
\newline{}
Moreover, for any small fixed $\epsilon > 0$ and $h \gg X^{\epsilon}$, we can apply Shiu's theorem (see Lemma \ref{Shiutheorem}) to deduce that 
\begin{equation}\label{15} \sum_{n=x}^{x+h} \lambda_{f}(n) \ll \sum_{n=x}^{x+h} |\lambda_{f}(n)| \ll h \prod_{p=1 \atop p \in \mathbf{P}}^{x}\left(1+\frac{|\lambda_{f}(p)|-1}{p}\right)
\end{equation}
where $\mathbf{P}$ is the set of all primes.
According to the Sato-Tate law, we have $$\prod_{p=1 \atop 
 p \in \mathbf{P} }^{X} \left(1+\frac{|\lambda_{f}(p)|-1}{p} \right) \ll (\log X)^{\frac{8}{3\pi}-1}$$ for sufficiently large $X$ (see \cite{Guangshi}, \cite{odoni}). Note that $\frac{8}{3\pi}-1 \sim -0.1511...$.  
\newline{}
In \cite{JUTILA1}, Jutila proved that for any fixed $\epsilon>0$ and $1 \leq l \leq X^{2/3},$ 
\begin{equation}\label{16} \sum_{n=1}^{X} \lambda_{f}(n) \lambda_{f}(n+l) \ll_{f,\epsilon} X^{2/3+\epsilon}. \end{equation}
We now apply the aforementioned bounds to get the following theorem.
\begin{Th}\label{th12}
Let $\epsilon>0$ be small fixed. Let $h,X$ be integers such that $1 \leq  h \ll X^{2/3}.$  
Then $$\frac{1}{h} \left|\sum_{n=x}^{x+h} \lambda_{f}(n)\right| \ll_{f,\epsilon} \max\Big(h^{-\frac{1}{2}}, X^{-\frac{1}{6}+\epsilon} \Big)$$ for almost all $x \in [X,2X],$ indicating that the proportion of integers in $[X,2X]$ that satisfy the given inequality approaches $1$ as $X$ tends to infinity. 
\end{Th}
\begin{proof} To apply the Chebyshev inequality, let us consider \begin{equation}\label{17} \frac{1}{X}\sum_{x=X}^{2X} \left|\frac{1}{h} \sum_{n=x}^{x+h} \lambda_{f}(n)\right|^{2}.\end{equation}
By squaring out the inner sums, we can represent \eqref{17} as 

\begin{equation} \frac{1}{Xh^{2}}\sum_{x=X}^{2X}\sum_{l=0}^{h}(2-\delta_{0}(l)) \sum_{n=x}^{x+h-l} \lambda_{f}(n) \lambda_{f}(n+l)  \nonumber\end{equation} where $\delta_{0}(l)=1$ for $l=0$ and $0$ otherwise. 
The diagonal terms ($l=0$) in \eqref{17} contribute
\begin{equation}\label{18}\frac{1}{Xh^{2}} \sum_{x=X}^{2X} \sum_{n=x}^{x+h} \lambda_{f}(n)^{2}.\end{equation} By using the fact that 
$\sum_{n=1}^{x} \lambda_{f}(n)^{2} \sim c_{f}X$ for some constant $c_{f}$ (see \cite[(14.56)]{IK1}), \eqref{18} is bounded by $O_{f}(h^{-1}).$ The off-diagonal terms ($1 \leq l \leq h$) in \eqref{17} contribute
\begin{equation}\label{19}\frac{1}{Xh^{2}} \sum_{l=1}^{h}\sum_{x=X}^{2X} \sum_{n=x}^{x+h-l} \lambda_{f}(n) \lambda_{f}(n+l).\end{equation}
 Therefore, by applying Deligne's bound $\lambda_{f}(n)\ll n^{\epsilon},$ \eqref{19} is bounded by 
\begin{equation}
 \frac{1}{Xh^{2}} \left(\sum_{l=1}^{h} (h-l) \sum_{n=X+h-l}^{2X-h+l} \lambda_{f}(n)\lambda_{f}(n+l) + \sum_{l=1}^{h} (h-l)^{2}X^{\epsilon}\right).
\nonumber\end{equation}
By using \eqref{16}, we have 
\begin{equation} \sum_{l=1}^{h} (h-l) \sum_{x=X+h-l}^{2X-h+l} \lambda_{f}(n)\lambda_{f}(n+l) \ll_{f,\epsilon} h^{2} X^{2/3+\epsilon}. \end{equation}
Therefore, \eqref{19} is bounded by 
\begin{equation} O_{f,\epsilon}\Big(X^{-1/3+\epsilon}+hX^{\epsilon-1}\Big).\end{equation}
By the assumption on the range of $h,$ \eqref{18} is bounded by 
$$O_{f,\epsilon} \max\Big(h^{-1}, X^{-1/3+\epsilon}\Big).$$ By applying the Chebyshev inequality, the proof is completed.
\end{proof}
\noindent
Note that the upper bound in Theorem \ref{th12} is better than the trivial bound provided by \eqref{15} when $h$ exceeds a certain power of $\log X,$ and is better than the trivial bound provided by \eqref{14} when $h\ll_{\epsilon} X^{\frac{103}{192}+\epsilon}.$  It's worth emphasizing that the upper bound for the shifted sums played a pivotal role in the proof of Theorem \ref{th12}
\subsection{Hecke-Maass cusp forms for $\bf{SL(n, \mathbb{Z})}$}
Let $\hbar^{n}=GL(n,\mathbb{R}) \slash (O(n,\mathbb{R}) \cdot  \mathbb{R}^{\times})$, where $\mathbb{R}^{\times}$ is the set of all invertible elements in $\mathbb{R}$. For any element $z$ in $\hbar^{n}$, by the Iwasawa decomposition (see \cite[Proposition 1.2.6]{Go}), it can be represented as $z=x \times y$, where 
\begin{equation} x=
\begin{bmatrix} 
	1 & x_{1,2} & x_{1,3} & \cdots   & x_{1,n}   \\
	 & 1 &x_{2.3}  &\cdots & x_{2,n}                     \\
	 &  &  \ddots & & \vdots                     \\
         &   &  & 1 &  x_{n-1,n}\\
        &   &  &   &  1 
	\end{bmatrix}, y=
\begin{bmatrix} 
	y_{1}y_{2}...y_{n-1} &  &    &   \\
	 & y_{1}...y_{n-2} &  & &                    \\
	 &  &  \ddots & &                   \\
         &   &  & y_{1} &  \\
        &   &  &   &  1 
	\end{bmatrix}
\end{equation}
with $x_{i,j} \in \mathbb{R}$\; and  \;$y_{i}>0.$
Let $ v=(v_{1},v_{2},...,v_{n-1}) \in \mathbb{C}^{n-1}$ and $m=(m_{1},m_{2}, ... , m_{n-2}, m_{n-1}) \in \mathbb{Z}^{n-1}.$
Let   
\begin{equation}
M(m_{1},m_{2},...,m_{n-1}):= 
\begin{bmatrix} 
	m_{1}m_{2}\cdots m_{n-2} |m_{n-1}|  &  &  &  &   \\
	 & \ddots &  & &                      \\
	 &  &  m_{1}m_{2} & &                      \\
         &   &  & m_{1} & \\
         &  & &  & 1  
	\end{bmatrix},
\end{equation}
and let $I_{v}(z)$ be defined by $$I_{v}(z):=\prod_{i=1}^{n-1} \prod_{j=1}^{n-1} y_{i}^{b_{i,j} v_{j}}.$$
For any $(n-1) \times (n-1)$ matrix $\gamma,$ define 
\begin{equation}
\gamma^{+}= 
\begin{bmatrix} 
	\gamma &     \\
	  &  1                  
	\end{bmatrix}.
\end{equation}
For any ring $\mathcal{B},$ let $U_{n}(\mathcal{B})$ be the set of all upper triangular matrices of the form
\begin{equation}
\begin{bmatrix} 
	I_{r_{1}} &  &  &  &   \\
	 & I_{r_{2}} &  &\ast &                      \\
	 &  &  \ddots & &                      \\
         &   &  &  &  I_{r_b}
	\end{bmatrix}
\end{equation}
for some natural numbers $r_1, r_2, \ldots, r_b$ such that $\sum_{i=1}^b r_i=n$, where $I_{r_i}$ denotes the $r_i \times r_i$ identity matrix, and $*$ denotes any arbitrary element in $\mathcal{B}$.
\noindent
Let $ \psi_{m}$ be the character of $U_{n}(\mathbb{R})$ defined by
$ \psi_{m}(u):= e^{2\pi i(m_{1}u_{1,2}+m_{2}u_{2,3}+...+m_{n-1}u_{n-1,n})}$ where
\begin{equation}
u= 
\begin{bmatrix} 
	1 & u_{1,2} & u_{1,3} & \cdots   & u_{1,n}   \\
	 & 1 &u_{2.3}  &\cdots & u_{2,n}                     \\
	 &  &  \ddots & & \vdots                     \\
         &   &  & 1 &  u_{n-1,n}\\
        &   &  &   &  1 
	\end{bmatrix}.
\end{equation}
For $z \in \hbar^{n}$ and $m \neq (0,0,...,0),$ the Jacquet-Whittaker function $W_{Jacquet}(z;v,\psi_{m})$ is defined by
\begin{equation}
W_{Jacquet}(z;v,\psi_{m})= \int_{U_{n}(R)} I_{v}(w_{n} \cdot u \cdot z) \overline{\psi_{m}(u)}d^{\ast}u,
\end{equation}
where
\begin{equation}
w_{n}= 
\begin{bmatrix} 
	  &  &  &  & (-1)^{[\frac{n}{2}]}  \\
	 &  &  & 1&                      \\
	 & \udots &  &  &                      \\
         1&   &  &  & 
	\end{bmatrix}. \nonumber
\end{equation}
Let $F(z)$ be a Hecke-Maass form for $SL(n, \mathbb{Z})$ given by 
\begin{equation}\begin{split} F(z)= \sum_{ \gamma \in U_{n-1}(\mathbb{Z})  \diagdown SL(n-1,\mathbb{Z})} & \sum_{m_{1}=1}^{\infty}  \cdots \sum_{m_{n-2}=1}^{\infty} \sum_{m_{n-1} \neq 0} \frac{\mathcal{A}_{F}(m_{1},m_{2},...,m_{n-1})}{\prod_{k=1}^{n-1} |m_{k}|^{\frac{k(n-k)}{2}}} 
\\& \times W_{Jacquet} \left(M\gamma^{+}z; v, \psi_{1,1,...,1,\frac{m_{n-1}}{|m_{n-1}|}}\right) \end{split} \end{equation}
(when $F(z)$ is a non-zero Hecke-Maass cusp form of type $v=\left(v_1, v_2, \ldots, v_{n-1}\right) \in \mathbb{C}^{n-1}$ for $S L(n, \mathbb{Z})$, we have the aforementioned expansion. see \cite[Theorem 9.3.11]{Go}). We have the following multiplicative relations 
$$  \mathcal{A}_{F}(m,1,...,1)\mathcal{A}_{F}(m_{1},m_{2},...,m_{n-1})= \sum_{\prod_{l=1}^{n} c_{l}=m \atop  c_{i}|m_{i} \textrm{for all} i \in [1,n-1]} \mathcal{A}_{F}\left(\frac{m_{1}c_{n}}{c_{1}}, \frac{m_{2}c_{1}}{c_{2}},...,\frac{m_{n-1}c_{n-2}}{c_{n-1}}\right).$$
For details, we refer the reader to \cite[Chapter 5, Chapter 9]{Go}. 
\subsection{The generalized Ramanujan-Petersson conjecture }\label{GRP2}
In order to prove that $\mathcal{A}_{F}(m,1,...,1)$ is divisor bounded (see Lemma \ref{Lemma 3.1}), it is necessary to assume the generalized Ramanujan-Petersson conjecture.  $L_{F}(s)$ has an Euler product
\begin{equation}\label{37} \begin{split}
L_{F}(s)= \prod_{p \in \bf{P}} &\Big(1-\mathcal{A}_{F}(p,...,1)p^{-s}+\mathcal{A}_{F}(1,p,...,1)p^{-2s} \\&- \cdots +(-1)^{n-1} \mathcal{A}_{F}(1,...,1,p)p^{-(n-1)s}+(-1)^{n}p^{-ns} \Big)^{-1}
\\&=\prod_{p \in \bf{P}} \prod_{j=1}^{n} \Big(1-a_{F,j}(p)p^{-s}\Big)^{-1}
\end{split}\end{equation}
for some $a_{F,j}(p) \in \mathbb{C}.$ The generalized Ramanujan-Petersson conjecture claims \begin{equation}\label{38} |a_{F,j}(p)|=1.\end{equation}
To study short averages of $\mathcal{A}_{F}(m,1,...,1),$ we will utilize the following result, which provides the upper bound for the long average of these values.
\begin{Lemma}[\cite{MJM}]
For sufficiently large $X,$
\begin{equation}\label{342} \sum_{m=1}^{X} \mathcal{A}_{F}(m,1,...,1) \ll_{F,\epsilon} X^{\frac{n^{2}-n}{n^{2}+1}+\epsilon}. \end{equation}
\end{Lemma}

\subsection{Zero-free regions and shifted convolution sum}
To establish results analogous to Theorem \ref{th12} on $\mathcal{A}_{F}(m,1,...,1)$, it suffices to obtain certain upper bounds for the following shifted sums 
$$\sum_{m=1}^{X} \mathcal{A}_{F}(m,1,...,1)\overline{\mathcal{A}_{F}(m+l,1,...,1)}$$ for $l \in [1,X]$ where $\overline{\mathcal{A}_{F}(m+l,1,...,1)}$ is the complex conjugate of $\mathcal{A}_{F}(m+l,1,...,1).$ Obtaining nontrivial upper bounds or asymptotics of the above shifted sum has many applications. As far as the author is aware, such results do not yet exist in the literature when $n \geq 3.$ As an alternative, let us consider successful approaches in multiplicative number theory.
Let $s=\sigma+it\thinspace$ where $\sigma, t \in \mathbb{R}.$ Define 
$$L_{F}(s):=\sum_{m=1}^{\infty} \frac{\mathcal{A}_{F}(m,1,...,1)}{m^{s}}.$$
The generalized Riemann hypothesis claims that there are no nontrivial zeros of $L_{F}(s)$ for $\sigma>\frac{1}{2}.$ The current state of knowledge of the zero-free region is, there are no nontrivial zeros of $L_{F}(s)$ where $$\sigma > 1- \frac{c_{F}}{\log |t+3|}$$ where $c_{F}>0$ is a constant depending on the conductor of $F$. In contrast, for the Riemann zeta function $\zeta(s),$ it is known that there are no nontrivial zeros where  \begin{equation} \sigma> 1- \frac{c_{1}}{(\log |t+3|)^{2/3}\log \log |t+3|} \nonumber \end{equation} and $|t|\gg 1$  for some constant $c_{1}>0.$
This zero-free region, characterized by the power of the logarithm in the denominator is $2/3$, is called the Vinogradov-Korobov zero-free region. Using this zero-free region, it can be shown that for any $A>0$ and any small fixed $\epsilon>0,$ we have 
\begin{equation}\label{goodin}\sum_{p=P}^{Q} \frac{1}{p^{1+it}} \ll_{A}  (\log X)^{-A} \quad \textrm{when} \quad t > (\log X)^{A+2}\end{equation}
for $e^{(\log X)^{2/3+\epsilon}} \ll P \leq Q \ll X$ (see \cite[Lemma 2]{Matomaki1}). By applying the above inequality, the authors of \cite{Matomaki1} show that nontrivial results concerning the  Liouville function in short interval can be obtained by using methods that are relatively simpler than the methods presented in \cite{MR3} (note that the methods outlined in \cite{MR3} are sophisticated and comprehensive, demonstrating improved results in cases where broad zero-free regions are not available). This motivates us to prove an analog of \eqref{goodin}, under the assumption of the generalized Ramanujan-Petersson conjecture (see \eqref{38}) and the existence of $c_{F}$ such that $L_{F}(s)$ satisfies the following zero-free region (we will refer to this zero-free region as the $SL(n,\mathbb{Z})$ Vinogradov-Korobov zero-free region):
\begin{equation}\label{Zerofree}\sigma> 1- \frac{c_{F}}{(\log |t+3|)^{2/3}\log \log |t+3|}.\end{equation} 
 
\begin{Rem}
Let $L(s,\text{sym}^{2}f_{j})$ be the symmetric square L-function of $j$th  Hecke-Maass cusp forms $f_{j}$ (with the Laplacian eigenvalue $\lambda_{j}:=\frac{1}{4}+t_{j}^{2}, t_{j}>0$) for $SL(2,\mathbb{Z})$ (for details, see \cite[Chapter 15]{IK1}). Define $$J =\left\{j:t_j \leqq T \log ^2 T, D_j(s) \text { has a zero in } \sigma \geqq 1-\eta,|t| \leqq \log ^3 T\right\} $$ where 
 $L(s, \text{sym}^2f_j)=\zeta(2 s) D_j(s).$ 
Luo \cite{LuoWZ1999} proved that 
$$\left|J\right| \ll T^{1 / 5}$$ if $\eta$ is sufficiently small.
A direct application of \cite[Theorem 1.2]{ThornerZaman} can show that almost all L-functions associated to Hecke-Maass cusp forms for $SL(n,\mathbb{Z})$ (with some restrictions on the size of analytic conductor) has the zero-free region
$\sigma > 1- \alpha, t \gg 1$ for sufficiently small $\alpha>0.$ 
\end{Rem}

\begin{Lemma}\label{Lemma12} Let $\epsilon>0$ be small fixed, and let $A$ be sufficiently large. Assume the zero-free region \eqref{Zerofree} and the generalized Ramanujan-Petersson conjecture. Then for  $e^{(\log X)^{2\epsilon}} \ll t \ll X,$ we have   
\begin{equation}\label{113} \sum_{p=P_{1}\atop p \in \mathbf {P}}^{Q_{1}} \frac{\mathcal{A}_{F}(p,1,...,1)}{p^{1+it}} \ll_{F} (\log X)^{-A}  \end{equation} 
for all $e^{ (\log X)^{2/3+\epsilon}}<P_{1}<Q_{1}<e^{ (\log X)^{1-\epsilon}}.$
\end{Lemma}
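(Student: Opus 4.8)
The plan is to run the classical Vinogradov--Korobov prime-sum argument --- exactly the one behind the estimate $\sum_{P\le p\le Q}p^{-1-it}\ll_{A}(\log X)^{-A}$ quoted just before the lemma --- now for the $GL(n)$ $L$-function $L_{F}$, using the wide zero-free region \eqref{Zerofree}. First I would pass from the Hecke eigenvalues to $L_{F}$: from the Euler product $L_{F}(s)=\prod_{p}\prod_{j=1}^{n}(1-\alpha_{j,p}p^{-s})^{-1}$ and $\mathcal A_{F}(p,1,\dots,1)=\sum_{j}\alpha_{j,p}$ one gets
\[
\sum_{p}\frac{\mathcal A_{F}(p,1,\dots,1)}{p^{s}}=\log L_{F}(s)+H(s),\qquad H(s):=\sum_{p}\sum_{k\ge 2}\frac1k\Big(\sum_{j}\alpha_{j,p}^{k}\Big)p^{-ks}.
\]
Under the generalized Ramanujan--Petersson conjecture $|\alpha_{j,p}|=1$, so $H$ is holomorphic and $H(s)\ll_{n}1$ on $\Re s\ge \tfrac34$; and by \eqref{Zerofree} we have $L_{F}(1+it)\ne 0$, so for each fixed $t$ the quantity $\log L_{F}(1+it)+H(1+it)$ is a well-defined constant which will serve as a ``main term''.

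Next, for $Y\in[P_{1},Q_{1}]$ apply a truncated Perron formula to this Dirichlet series at height $T'=\exp\big((\log X)^{1-\epsilon/2}\big)$ (so that $T'\gg Y(\log X)^{A+2}$); the truncation error is $\ll_{n}(\log X)^{-A}$, and with $c\asymp 1/\log Y$,
\[
\sum_{p\le Y}\frac{\mathcal A_{F}(p,1,\dots,1)}{p^{1+it}}=\frac{1}{2\pi i}\int_{c-iT'}^{c+iT'}\big(\log L_{F}(1+it+w)+H(1+it+w)\big)\frac{Y^{w}}{w}\,dw+O_{n}\big((\log X)^{-A}\big).
\]
Then shift the contour to $\Re w=-\eta$ with
\[
\eta:=\frac{c_{F}/2}{\big(\log(|t|+T'+3)\big)^{2/3}\log\log(|t|+T'+3)}\,,
\]
which by \eqref{Zerofree} stays strictly inside the zero-free region for every imaginary part in $[t-T',t+T']$, so $\log L_{F}$ is holomorphic and single-valued on the rectangle crossed. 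The only pole met is the simple pole of $Y^{w}/w$ at $w=0$, contributing $\log L_{F}(1+it)+H(1+it)$; the three remaining sides are bounded using the standard polylogarithmic estimate $|\log L_{F}(\sigma+i\tau)|\ll_{F}\big(\log(|\tau|+3)\big)^{O(1)}$ valid throughout $\sigma\ge 1-\eta$, $|\tau|\le|t|+T'$. This yields
\[
\sum_{p\le Y}\frac{\mathcal A_{F}(p,1,\dots,1)}{p^{1+it}}=\log L_{F}(1+it)+H(1+it)+O_{F}\!\big(Y^{-\eta}(\log X)^{O(1)}\big)+O_{n}\big((\log X)^{-A}\big).
\]

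Now comes the telescoping and the numerical check, which finishes the proof. Since $\log L_{F}(1+it)+H(1+it)$ does not depend on $Y$, subtracting the last display for $Y=P_{1}$ from that for $Y=Q_{1}$ gives
\[
\sum_{p=P_{1}}^{Q_{1}}\frac{\mathcal A_{F}(p,1,\dots,1)}{p^{1+it}}\ll_{F}\max_{Y\in\{P_{1},Q_{1}\}}Y^{-\eta}(\log X)^{O(1)}+(\log X)^{-A}.
\]
Because $P_{1}>\exp\big((\log X)^{2/3+\epsilon}\big)$ we have $Y^{-\eta}\le\exp\big(-\eta(\log X)^{2/3+\epsilon}\big)$; because $t\ll X$ and $T'\le\exp\big((\log X)^{1-\epsilon/2}\big)$ we have $\log(|t|+T'+3)\ll\log X$, hence $\eta\gg_{F}(\log X)^{-2/3}/\log\log X$, so $\eta(\log X)^{2/3+\epsilon}\gg_{F}(\log X)^{\epsilon}/\log\log X$, which for large $X$ exceeds $(A+O(1))\log\log X$. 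Thus the right-hand side is $\ll_{F}(\log X)^{-A}$, uniformly for all admissible $P_{1},Q_{1}$ and $t$ in the stated ranges (the lower bound $t\gg\exp((\log X)^{2\epsilon})$ serves only to keep $\log\log(|t|+3)\asymp\log\log X$, making the bound uniform, and is otherwise harmless).

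The main obstacle is the contour shift in the second paragraph, or rather the uniform polylogarithmic bound for $\log L_{F}$ (equivalently for $L_{F}'/L_{F}$) throughout the zero-free region up to height $\approx X$ that it requires. For $\zeta$ this is classical; for a general Hecke--Maass $L$-function $L_{F}$ one must combine \eqref{Zerofree} with the analytic package established in Section 3 --- $L_{F}$ entire of order one, its functional equation, and the convexity bound --- via the Hadamard factorization and a Borel--Carath\'eodory argument, to control $L_{F}'/L_{F}$ near the line $\Re s=1$. Once that estimate is in hand, everything else (the Perron truncation, the residue computation, the telescoping, and the numerics with the ranges of $P_{1},Q_{1},t$) is routine.
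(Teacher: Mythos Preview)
Your argument is correct and follows essentially the same route as the paper: pass from the prime sum to $\log L_{F}$ via the Euler product (controlling higher prime powers by \textbf{GRP}), apply a truncated Perron formula, shift the contour to depth $\asymp (\log X)^{-2/3}(\log\log X)^{-1}$ inside the zero-free region \eqref{Zerofree}, bound $\log L_{F}$ there by Borel--Carath\'eodory, and conclude by the numerics $\eta\cdot\log P_{1}\gg (\log X)^{\epsilon}/\log\log X$. The only cosmetic differences are that the paper applies Perron once with the kernel $(Q_{1}^{s}-P_{1}^{s})/s$ (which is holomorphic at $s=0$, so no residue appears), whereas you apply it twice with $Y^{w}/w$ and let the residues cancel upon subtraction; and the paper truncates at height $T=(|t|+1)/2$ rather than your $T'=\exp\big((\log X)^{1-\epsilon/2}\big)$, which keeps all imaginary parts $\asymp|t|$ and slightly streamlines the uniformity check.
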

\begin{proof}
The proof of this basically follows from the proof of \cite[Lemma 2]{Matomaki1}.
Let $\kappa= \frac{1}{\log X}, T= \frac{|t|+1}{2}.$ 
By using Perron's Formula, we have 
\begin{equation}\label{114}\begin{split}
\sum_{p=P_{1}\atop p \in \mathbf {P}}^{Q_{1}} \frac{\mathcal{A}_{F}(p,1,...,1)}{p^{1+it}} = \frac{1}{2\pi i} & \int_{\kappa-iT}^{\kappa+iT} \log L_{F}(s+1+it) \frac{Q_{1}^{s}-P_{1}^{s}}{s} ds + O(R_{F})\\+O_{\epsilon}(P_{1}^{-1+\epsilon}) \end{split}\end{equation}
where $$R_{F} = \max \left(\sum_{m=1}^{4Q_{1}} \frac{|\mathcal{A}_{F}(m,1,...,1)|}{m\max \left(1, T\left|\log \frac{Q_{1}}{m}\right|\right)}\left(\frac{Q_{1}}{m}\right)^{\kappa} ,\sum_{m=1}^{4P_{1}} \frac{\left|\mathcal{A}_{F}(m,1,...,1)\right|}{m\max \left(1, T\left|\log \frac{P_{1}}{m}\right|\right)}\left(\frac{P_{1}}{m}\right)^{\kappa}\right) $$ (see \cite[Corollary 5.3]{Montgomery2}). Note that $O_{\epsilon}\left(P_{1}^{-1+\epsilon}\right)$ arises from a simple application of Lemma \ref{Lemma 3.1} ($\mathcal{A}_{F}(p^{k},1,...,1) \ll d_{m}(p^{k})$) to 
\begin{equation}\sum_{p^{k}, \atop k \geq 2 , p \in [P_{1},Q_{1}]\cap \mathbf{P}} \frac{|\mathcal{A}_{F}\left(p^{k},1,...,1\right)|}{p^{k(1+it)}}. \nonumber \end{equation}
By applying the upper bound \eqref{314} ($\mathcal{A}_{F}\left(m,1,...,1\right) \ll_{\epsilon} m^{\epsilon}$),  it is straightforward to show that $R_{F} \ll_{F.\epsilon} Q_{1}^{\epsilon}/T.$
Let $\sigma_{0}=\frac{c_{F}/2}{(\log X)^{2/3}(\log \log X)^{1/3}}.$
By shifting the line of integration in \eqref{114} to $[-\sigma_{0}-iT, -\sigma_{0}+iT]$ and changing variables, we have 
\begin{equation}\label{115}\begin{split}
\sum_{p=P_{1},\atop p \in \mathbf {P}}^{Q_{1}} \frac{\mathcal{A}_{F}(p,1,...,1)}{p^{1+it}} &= \frac{1}{2\pi i} \int_{-iT}^{+iT} \log L_{F}(1-\sigma_{0}+it+iu) \frac{Q_{1}^{-\sigma_{0}+iu}-P_{1}^{-\sigma_{0}+iu}}{-\sigma_{0}+iu} du \\&+ O_{F,\epsilon}\left(\frac{Q_{1}^{\epsilon}}{T}\right)+O_{F}\left(P_{1}^{-1}\right). \end{split}\end{equation}
Note that a standard application of the Borel-Caratheodory theorem gives (see \cite[Section 4]{FG}),  \begin{equation}\label{116} \log L_{F}(\sigma_{1}+it+iu) \ll_{F} |\log (t+u+2)|\end{equation} for $\sigma_{1} \in [1-\sigma_{0}, 1+\kappa].$
Therefore, by applying \eqref{116} to \eqref{115}, \eqref{113} is bounded by 
$(\log X)^{-A}.$
\end{proof}
\begin{Rem} Without assuming the zero-free region, the argument presented in the proof of Lemma \ref{Lemma12} gives that for $f_{j}$ where $t_{j} \leq T\log^{2}T$ and $ (\log T)^{3}>X,$
\begin{equation}\label{symsquare}\sum_{p=P_{1}}^{Q_{1}} \frac{1+\lambda_{f_{j}}(p^{2})}{p^{1+it}} \ll_{f_{j}} (\log X)^{-A}
\end{equation}
holds for almost all $j \in J.$ Additionally, it can be directly deduced that almost all L-functions associated to Hecke-Maass cusp forms for $SL(n,\mathbb{Z})$  satisfy \eqref{113}.
\end{Rem}
\noindent In \cite{Jesse}, J\"{a}\"{a}saari showed that under the assumption of the generalized Lindel\"{o}f hypothesis and a weak version of the Ramanujan-Petersson conjecture,
when $F$ is a nontrivial Hecke-Maass cusp form for $SL(n,\mathbb{Z}),$ 
\begin{equation}
\frac{1}{X} \int_{X}^{2X} \Big| \sum_{m=x}^{x+h} \mathcal{A}_{F}(m,1,...,1) \Big|^{2} dx \sim B_{F} X^{1-\frac{1}{n}}  
\nonumber
\end{equation} holds for some constant $B_{F},$ where $h \in [X^{1-\frac{1}{n}+\epsilon}, X^{1-\epsilon}]$ for some small fixed $\epsilon>0.$
In the following subsection, we consider the wider range $h \in [e^{(\log X)^{1-\epsilon}}, X^{1-\epsilon}].$
\subsection{Main results}

\begin{Theorem-non} Let $\epsilon>0$ be small fixed, and let $X$ be sufficiently large. Let $e^{ (\log X)^{1-\epsilon}} \ll_{\epsilon} h\ll_{\epsilon} X^{1-\epsilon}.$   Assume the generalized Ramanujan-Petersson conjecture on $F$ and the $SL(n,\mathbb{Z})$ Vinogradov-Korobov zero-free region \eqref{Zerofree}. Then

\begin{equation}
\frac{1}{X} \int_{X}^{2X} \left| \sum_{m=x}^{x+h} \mathcal{A}_{F}(m,1,...,1) \right|^{2} dx \ll_{F,\epsilon} h^{2}(\log X)^{-2/3+\epsilon}.
\nonumber
\end{equation} 
\end{Theorem-non}
\noindent This theorem implies the following corollaries.
\begin{Cor}\label{Theorem 3.10} Let $\epsilon>0$ be small fixed, and let $X$ be sufficiently large. Let $e^{ (\log X)^{1-\epsilon}} \ll_{\epsilon} h\ll_{\epsilon} X^{1-\epsilon}.$   Assume the generalized Ramanujan-Petersson conjecture on $F$ and the $SL(n,\mathbb{Z})$ Vinogradov-Korobov zero-free region \eqref{Zerofree}. Then 
 
\begin{equation}\label{grpgvk} \frac{1}{h}\sum_{m=x}^{x+h} \mathcal{A}_{F}(m,1,...,1) \ll_{F,\epsilon}  (\log X)^{-\frac{1}{3}+\epsilon} \end{equation}
for almost all $x\in [X,2X].$
\end{Cor}

\begin{Cor}\label{Cor2}
Let $\epsilon>0$ be small fixed, and let $X$ be sufficiently large. Let $e^{ (\log X)^{1-\epsilon}} \ll_{\epsilon} h\ll_{\epsilon} X^{1-\epsilon}.$   Assume the generalized Ramanujan-Petersson conjecture on $F$ and the zero-free region \eqref{Zerofree}. Then
\begin{equation} \frac{1}{Xh^{2}}\sum_{l=-h \atop l \neq 0}^{h} |h-l|  \left(\sum_{m=X}^{2X}\mathcal{A}_{F}(m,1,...,1) \overline{\mathcal{A}_{F}(m+l,1,...,1)} \right) \ll_{F,\epsilon}  (\log X)^{-2/3+\epsilon}. \nonumber\end{equation}
\end{Cor}
\noindent The $SL(n,\mathbb{Z})$ Sato-Tate conjecture, coupled with the generalized Ramanujan-Petersson conjecture, implies that for sufficiently large $X,$
\begin{equation}\label{Sato}
\prod_{p \leq X} \left(1+\frac{|\mathcal{A}_{F}(p,1,...,1)|-1}{p}\right) \asymp (\log X)^{\kappa_{n}-1}    
\end{equation} 
where $$\kappa_{n}:= \frac{1}{n! (2 \pi)^{n-1}}\int_{[0,2\pi)^{n-1} \atop \theta_{1}+\theta_{2}+...+\theta_{n}=0} \left|e^{i\theta_{1}}+e^{i\theta_{2}}+...+e^{i\theta_{n}}\right| \prod_{1 \leq j < k \leq n} \left|e^{i\theta_{j}}-e^{i\theta_{k}}\right|^{2}d\theta_{1}d\theta_{2}...d\theta_{n-1}$$ (see \cite[(1.16)]{YGZ}). For example, $\kappa_{3} \asymp 0.8911, \kappa_{4} \asymp 0.8853,$ and $\kappa_{5} \asymp 0.8863.$ 
Therefore, by using \eqref{shiu2} and \eqref{Sato}, we have
\begin{equation}\label{Trivial} \frac{1}{h}\sum_{l=1}^{h}\sum_{m=X}^{2X} \mathcal{A}_{F}(m,1,....,1)\overline{\mathcal{A}_{F}(m+h,1,...,1)} \ll X(\log X)^{2\kappa_{n}-2} \end{equation} under the generalized Ramanujan-Petersson conjecture. By assuming the $SL(n,\mathbb{Z})$ Vinogradov-Korobov zero-free region instead of the Sato-Tate conjecture, when $\kappa_{n} > 5/6,$ we can obtain a superior upper bound for the shifted sums compared to  \eqref{Trivial}. 
\begin{Cor}\label{Cor13} Let $\epsilon>0$ be small fixed and let $X$ be sufficiently large. Let $e^{ (\log X)^{1-\epsilon}} \ll_{\epsilon} h\ll_{\epsilon} X^{1-\epsilon}.$ Assume the generalized Ramanujan-Petersson conjecture on $F$ and the zero-free region \eqref{Zerofree}. Then
\begin{equation}\label{Shiftedsum} \frac{1}{X(2h-1)}\sum_{l=-h \atop l \neq 0}^{h}  \left(\sum_{m=X}^{2X}\mathcal{A}_{F}(m,1,...,1) \overline{\mathcal{A}_{F}(m+l,1,...,1)} \right) \ll_{F,\epsilon}  (\log X)^{-1/3+\epsilon}. \end{equation} 
\end{Cor}
\begin{proof} 
The left-hand side of \eqref{Shiftedsum} can be represented as 
\begin{equation}\frac{1}{X}\int_{X}^{2X} \left(\frac{1}{2h-1} \sum_{m=[x-h] \atop m \neq x}^{x+h} \overline{\mathcal{A}_{F}(m,1,...1)}\right) \mathcal{A}_{F}([x],1,...,1) dx. \end{equation}
Note that
\begin{equation}\label{Rankinselbergappl} \int_{X}^{2X} \left|\mathcal{A}_{F}([x],1,...,1)\right|^{2} dx \ll \sum_{n=X}^{2X} \left|\mathcal{A}_{F}(n,1,...,1)\right|^{2} \ll_{F} X \end{equation}
(see \cite[Remark 12.1.8]{Go}).
By applying H\"older's inequality, Theorem \ref{Theorem 3.10}, and \eqref{Rankinselbergappl}, we see that 
\begin{equation}\begin{split}
&\int_{X}^{2X} \left(\frac{1}{2h-1} \sum_{m=[x-h] \atop m \neq x}^{x+h} \overline{\mathcal{A}_{F}(m,1,...1)}\right) \mathcal{A}_{F}([x],1,...,1) dx
\\&\ll_{F} \left(\int_{X}^{2X} \left|\frac{1}{2h-1} \sum_{m=[x-h]}^{x+h} \mathcal{A}_{F}(m,1,...1)\right|^{2}  dx \int_{X}^{2X}    |\mathcal{A}_{F}([x],1,...,1)|^{2} dx  \right)^{1/2}   
\\&\ll_{F,\epsilon} X ( \log X)^{-1/3+\epsilon}.
\end{split} 
\end{equation}
\end{proof}
\begin{Rem}\label{Remark15}
If the zero-free region  \begin{equation}\label{anotherzero} \sigma> 1- \frac{c_{1}}{(\log |t|+3)^{\theta}} \end{equation} holds for some constants $c_{F}>0,$ $\theta <1,$ then it is easy to show that we can replace $ (\log X)^{-1/3+\epsilon}$ in Theorem \ref{Theorem 3.10} with $ (\log X)^{\theta-1+\epsilon}$ (the cancellation $\log P/ \log Q$ comes from the truncation in Lemma \ref{Lemma34}. Therefore, we only need to replace $P=e^{(\log X)^{2/3+\epsilon}}$ with $P=e^{(\log X)^{\theta+\epsilon}}$ in the beginning of Section 2). Since the Vinogradov-Korobov zero-free region is proved for the Riemann zeta function, one could get similar cancellations on the symmetric square L-functions. For example, without assuming the zero-free region, one may apply \eqref{symsquare}. 
\end{Rem}
 We say that a multiplicative function $f:\mathbb{N} \rightarrow \mathbb{R}$ has $k$ sign changes in $[1,x]$ if there exist integers $1 \leq m_{1} < m_{2} <... < m_{k+1} \leq x$ such that $\prod_{i=1}^{k+1}f(m_{i})\neq 0$ and $f(m_{i})f(m_{i+1})<0$ for all $i \leq k.$ In  \cite{MR}, it has been demonstrated that when $f(n)$ is negative for some $n$, and $f(n)$ is non-zero for a positive proportion, then, as $\psi(x)$ tends toward infinity, almost every interval $[x, x+\psi(x)]$ has a sign change (see \cite[Corollary 4]{MR}.
As an application of \eqref{anotherzero}, let us talk about the sign changes of $\{\mathcal{A}_{F}(m,1,...,1)\}_{m=1}^{X}.$ Since we do not know that $\mathcal{A}_{F}(m,1,...,1)$ is non-zero for a positive proportion, we cannot apply the aforementioned result. 

\noindent By assuming the Ramanujan-Petersson conjecture, it can be proved that \begin{equation}\label{1122}\sum_{m \leq x} |\mathcal{A}_{F}(m,1,...1)| \gg x (\log x)^{\frac{1}{n}-1}.\end{equation} 
This lower bound can be derived from the work of Wirsing in \cite{Wirsing} along with the following lower bound:  
\begin{equation}\begin{split} \sum_{p \leq x} |\mathcal{A}_{F}(p,1,...,1)|\log p &\geq \frac{1}{n}\sum_{p \leq x} |\mathcal{A}_{F}(p,1,...,1)|^{2}\log p \\&= \frac{1}{n}\big(1+o(1)\big)x
\end{split}\end{equation}
(see \cite[(1.15)]{YGZ}). 
When $F$ is self-dual, $\mathcal{A}_{F}(m,1,...,1) \in \mathbb{R}$ for all $m \in \mathbb{N}.$ Hence, for the self-dual Hecke-Maass cusp forms $F,$  if
$$\sum_{m=x}^{x+h} \mathcal{A}_{F}(m,1,...,1) = o\left( \sum_{m=x}^{x+h} |\mathcal{A}_{F}(m,1,...,1)| \right)$$ and
$$\sum_{m=x}^{x+h} |\mathcal{A}_{F}(m,1,...,1)| \neq 0, $$
then 
$$ \sum_{m=x}^{x+h} \left(|\mathcal{A}_{F}(m,1,...,1)| \pm  \mathcal{A}_{F}(m,1,...,1)\right) >0, $$ which indicates that $[x, x+h]$ has a sign change.  
 Therefore, we will prove the following corollary in Section 3.
\begin{Cor}\label{Cor16} Let $\epsilon>0$ be sufficiently small and let $X$ be sufficiently large. Let $F$ be a self-dual Hecke-Maass cusp form for $SL(n,\mathbb{Z}).$  Assume the generalized Ramanujan-Petersson conjecture on $F,$ the zero-free region \eqref{anotherzero} for some $\theta< \frac{1}{n}.$  Then the number of sign changes in  $\{\mathcal{A}_{F}(m,1,...,1)\}_{m=1}^{X}$
is $\gg_{F,\epsilon} \frac{X}{e^{(\log X)^{2-\epsilon}}}.$ 
\end{Cor}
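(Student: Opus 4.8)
The plan is to combine self-duality with the elementary observation recorded just above the statement: since $\mathcal{A}_{F}(m,1,\ldots,1)\in\mathbb{R}$, whenever an interval $(x,x+h]$ contains \emph{no} sign change of $\{\mathcal{A}_{F}(m,1,\ldots,1)\}_{m\le X}$ we have the identity $\big|\sum_{x<m\le x+h}\mathcal{A}_{F}(m,1,\ldots,1)\big|=\sum_{x<m\le x+h}|\mathcal{A}_{F}(m,1,\ldots,1)|$. So it suffices to show that, for a window length $h$ only slightly above $e^{(\log X)^{1-\epsilon}}$, only a small set of $x\in[1,X]$ can enjoy this property; since each sign change of the sequence forces this property to fail only for an $x$-interval of length $\le h$, a lower bound of size $\gg X(\log X)^{2/n-2}$ for the ``bad'' set will give $N\gg X(\log X)^{2/n-2}/h$ for the number $N$ of sign changes. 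Throughout I write $f(x):=\sum_{x<m\le x+h}\mathcal{A}_{F}(m,1,\ldots,1)$ and $g(x):=\sum_{x<m\le x+h}|\mathcal{A}_{F}(m,1,\ldots,1)|$.

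The three analytic inputs are as follows. First, the mean-square version of Theorem~\ref{Theorem 3.10}, upgraded via Remark~\ref{Remark15} with the hypothesis \eqref{anotherzero}, gives $\frac1Y\int_Y^{2Y}|f(x)|^2\,dx\ll_{F,\epsilon}h^2(\log Y)^{2(\theta-1)+\epsilon}$ at every scale $Y$ for which $h$ lies in the admissible range $e^{(\log Y)^{1-O(\epsilon)}}\ll h\ll Y^{1-O(\epsilon)}$; summing this over dyadic scales $Y\in[h^2,X]$ after a Cauchy--Schwarz step (and bounding the contribution of $Y<h^2$ trivially by $|f(x)|\ll_\epsilon hX^\epsilon$ via \eqref{314}) yields
\[
\int_1^X|f(x)|\,dx\ll_{F,\epsilon}Xh(\log X)^{\theta-1+\epsilon}.
\]
Second, \eqref{1122} gives $\int_1^X g(x)\,dx=h\sum_{m\le X}|\mathcal{A}_{F}(m,1,\ldots,1)|+O(h^2X^\epsilon)\gg_F Xh(\log X)^{\frac1n-1}$. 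Third, the Rankin--Selberg bound $\sum_{m\le X}|\mathcal{A}_{F}(m,1,\ldots,1)|^2\ll_F X$, together with $2|ab|\le a^2+b^2$ after opening the square, gives $\int_1^X g(x)^2\,dx\ll h\sum_{|l|<h}\sum_{m\le X+h}|\mathcal{A}_{F}(m,1,\ldots,1)|^2\ll_F h^2X$.

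Now let $E\subseteq[1,X-h]$ be the set of $x$ for which $(x,x+h]$ contains no sign change of $\{\mathcal{A}_{F}(m,1,\ldots,1)\}_{m\le X}$. On $E$ we have $|f(x)|=g(x)$, so by the first input $\int_E g(x)\,dx=\int_E|f(x)|\,dx\le\int_1^X|f(x)|\,dx\ll_{F,\epsilon}Xh(\log X)^{\theta-1+\epsilon}$. On the other hand, writing $E^c=[1,X-h]\setminus E$ and using the second and third inputs with Cauchy--Schwarz,
\[
\int_E g(x)\,dx=\int_1^{X-h}g(x)\,dx-\int_{E^c}g(x)\,dx\ \geq\ c_F\,Xh(\log X)^{\frac1n-1}-\Big(\int_1^X g(x)^2\,dx\Big)^{1/2}|E^c|^{1/2}.
\]
Each sign change lies in $(x,x+h]$ for a set of $x$ of measure $\le h$, hence $|E^c|\le Nh$. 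Since $\theta<\tfrac1n$, choosing $\epsilon$ small makes the term $Xh(\log X)^{\theta-1+\epsilon}$ equal to $o\big(Xh(\log X)^{\frac1n-1}\big)$, and comparing the two displays yields $Xh(\log X)^{\frac1n-1}\ll_F(h^2X)^{1/2}(Nh)^{1/2}$, i.e. $N\gg_F X(\log X)^{\frac2n-2}/h$.

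Finally I would take $h\asymp e^{(\log X)^{1-2\epsilon}}$ — admissible in the first input provided $\epsilon$ is small enough that also $\theta+2\epsilon<\tfrac1n$ — so that
\[
N\gg_F X(\log X)^{\frac2n-2}e^{-(\log X)^{1-2\epsilon}}\gg_{F,\epsilon}\frac{X}{e^{(\log X)^{1-\epsilon}}},
\]
using that $(\log X)^{1-\epsilon}-(\log X)^{1-2\epsilon}\to\infty$ faster than $\log\log X$. The only substantial ingredients are Theorem~\ref{Theorem 3.10} together with Remark~\ref{Remark15} and the lower bound \eqref{1122}; everything else is routine, and the one point requiring care is the dyadic transfer in the first input together with checking that $h$ can be taken close enough to $e^{(\log X)^{1-\epsilon}}$ for the range conditions to hold at every scale (it is precisely to keep this bookkeeping clean that I use the $L^2$ bound $\sum_{m\le X}|\mathcal{A}_{F}(m,1,\ldots,1)|^2\ll_F X$ rather than a pointwise Shiu-type bound on $g$).
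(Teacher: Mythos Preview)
Your proof is correct. It differs from the paper's route in a useful way. The paper argues pointwise: it claims that the set of $x\in[\tfrac{X}{2},X)$ with $g(x)>h'(\log X)^{1/n-1}/\log\log X$ has measure $\gg X$, and combines this with the almost-all bound $|f(x)|\ll h'(\log X)^{\theta-1+\epsilon}$ from Remark~\ref{Remark15} to conclude that a set of $x$ of measure $\gg X$ carries a sign change, whence $N\gg X/h'$ with $h'=e^{(\log X)^{1-\epsilon}}$. You instead work entirely in $L^{1}/L^{2}$: you bound $\int_{E}g=\int_{E}|f|\le\int|f|$ using a dyadic Cauchy--Schwarz transfer of the mean-square theorem, and bound $\int_{E^{c}}g$ by Cauchy--Schwarz together with the Rankin--Selberg estimate $\int g^{2}\ll h^{2}X$; comparing with $\int g\gg Xh(\log X)^{1/n-1}$ forces $|E^{c}|\gg X(\log X)^{2/n-2}$. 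This loses the factor $(\log X)^{2/n-2}$ relative to the paper's claimed $|E^{c}|\gg X$, which you then recover by taking the window $h\asymp e^{(\log X)^{1-2\epsilon}}$ and absorbing the polylogarithmic loss into the gap between $e^{(\log X)^{1-2\epsilon}}$ and $e^{(\log X)^{1-\epsilon}}$. The advantage of your route is robustness: the paper's pointwise step ``$|\{x:g(x)\text{ large}\}|\gg X$'' is asserted from \eqref{1122} alone but is not obviously justified without a matching short-interval upper bound on $g$ (Shiu together with Lemma~\ref{Lemma33} only yields measure $\gg X(\log X)^{1/n-1}$, which does not straightforwardly beat the Chebyshev exceptional set for $f$ when $n\ge3$); your $L^{2}$ use of Rankin--Selberg sidesteps this issue entirely. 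The bookkeeping you flag --- that at every dyadic scale $Y\le X$ one may take the theorem's small parameter to be $2\epsilon$ so that $h=e^{(\log X)^{1-2\epsilon}}\ge e^{(\log Y)^{1-2\epsilon}}$ --- is correct, and the trivial bound on $\int_{1}^{h^{2}}|f|$ is indeed negligible since $h=X^{o(1)}$.
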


\begin{Rem}We use the generalized Ramanujan-Petersson conjecture to give the upper bounds of second moments, shifted convolution sums of Hecke eigenvalue squares over some subsets in long intervals. 
\end{Rem}
\begin{Rem}
Applying methods from \cite{mangerel2021divisorbounded} can deduce results similar to our results. The methods in \cite{mangerel2021divisorbounded} rely on understanding the relationship between values of multiplicative functions on $p$ and Dirichlet characters $p^{it}$ over primes $p$ instead of assuming the $SL(n,\mathbb{Z})$ Vinogradov-Korobov zero-free regions.
The results from the above methods may hold for a wider range of $h$, but the resulting cancellations are weaker compared to our finding. Moreover, we lack a comprehensive understanding of the correlations between $\mathcal{A}_{F}(p,1,...,1)$ and $p^{it}.$ Specifically, under the generalized Ramanujan-Petersson conjecture, the best possible upper bound for the left-hand side of \eqref{grpgvk} from the methods (see \cite[Theorem 1.7]{mangerel2021divisorbounded}) is 
$$(\log X)^{-\frac{(0.1211...)}{32+\frac{42}{n}}}.$$ Note that the above upper bound is larger than \eqref{Trivial}. 
\end{Rem}

\section{Lemmas}
\noindent From now on, let $\epsilon$ be a small, fixed positive number, and the value of $\epsilon$ may differ from one occurrence to another. Let $P=e^{ (\log X)^{2/3+\epsilon}}$ and $Q=e^{ (\log X)^{1-\epsilon}}.$
Let $S_{d}$ be the set of integers in $\thinspace [X/d,2X/d] \thinspace$ having at least one prime factor in $[P,Q]$. We denote $S_{d} ':= \{n \in [X/d,2X/d]: n \notin S_{d}\}.$ For convenience we denote $S$ instead of $S_{1},$ $S'$ instead of $S_{1}'.$ For simplicity, we use $$\prod_{p} a_{p} \,,\; \sum_{p} a_{p}  \;\; \textrm{instead} \; \textrm{of} \;\; \prod_{p \in \mathbf{P}} a_{p} \,,\; \sum_{p \in \mathbf{P}}a_{p}$$ respectively, for any $a_{p}$. Let $d_{n}(m)$ be the $n$th divisor function (the coefficients of $m^{-s}$ of $\zeta(s)^{n}$). For convenience, let $\bf{GVK}$ stand for the $SL(n,\mathbb{Z})$ Vinogradov-Korobov zero-free region \eqref{Zerofree} and let $\bf{GRP}$ stand for the generalized Ramanujan-Petersson conjecture. We will apply the following lemma to obtain upper bounds for mean values of multiplicative functions over intervals.

\begin{Lemma}\label{Shiutheorem}\rm{(Variants of Shiu's theorem)}
Let $\alpha, \beta \geq 1,$ $\epsilon>0.$ Let $g$ be a non-negative multiplicative function such that 
\begin{equation}\label{22} \begin{split} &g(p^{v}) \leq \alpha^{v} \rm{\quad for \thinspace \thinspace all} \thinspace \mathit{p} \in \mathbf{P}, \mathit{v}\in \mathbb{N}, \\
                          &g(n) \leq \beta n^{\frac{\epsilon^{3}}{1000}} \rm{\quad for \thinspace \thinspace all \thinspace } \mathit{n} \in \mathbb{N}. \end{split} \end{equation}
Then for $2 \leq X^{\epsilon} \leq Y \leq X, 1\leq H \leq X,$
\begin{equation} \frac{1}{Y} \sum_{n=X}^{X+Y} g(n) \ll_{\alpha,\beta, \epsilon} \prod_{p=1}^{X} \left(1+\frac{ g(p)-1}{p}\right), \end{equation}

\begin{equation}\label{shiu2} \sum_{l=1}^{H} \sum_{n=X}^{X+Y} g(n)g(n+l) \ll_{\alpha, \beta, \epsilon} HY  \prod_{p=1}^{X} \left(1+\frac{ g(p)-1}{p}\right)^{2}. \end{equation}
\end{Lemma} 
\begin{proof}
See \cite[Lemma 2.3]{MRT2}.
\end{proof}
\noindent
By using Lemma \ref{Shiutheorem}, it is easy to check that typical numbers in $[X/d,2X/d]$ are contained in $S_{d}.$ 
By using \eqref{22}, we have 
\begin{equation} \begin{split}\Big| \{n \in [X,2X]: n \notin S_{d}\} \Big| &\ll X\log P/\log Q^{d} \\ &\ll X(\log X)^{-1/3+\epsilon}/d.\end{split} \end{equation} 

\begin{Lemma}\label{Lemma22} Let $h \geq 1.$ For sufficiently large $X,$
\begin{equation}\sum_{l=1}^{h}  \sum_{n=X}^{2X}1_{S'}(n)1_{S'}(n+l) \ll_{\epsilon}  hX (\log P / \log Q)^{2} \end{equation}
\end{Lemma}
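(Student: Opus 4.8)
\noindent
The plan is to dominate the $0$--$1$ indicator $1_{S'}$ by a genuine multiplicative function and then quote the shifted Shiu estimate~\eqref{shiu2}. First I would introduce the multiplicative function $g$ defined on prime powers by $g(p^{v})=0$ if $p\in[P,Q]$ and $g(p^{v})=1$ otherwise; thus $g$ is non-negative and $g(n)=1$ exactly when $n$ has no prime factor in $[P,Q]$. Since every element of $S'$ is an integer with no prime factor in $[P,Q]$, and $1_{S'}$ is supported on $[X,2X]$, we have $1_{S'}(m)\le g(m)$ for every $m\in\mathbb{N}$, hence
\[
\sum_{l=1}^{h}\sum_{n=X}^{2X}1_{S'}(n)\,1_{S'}(n+l)\ \le\ \sum_{l=1}^{h}\sum_{n=X}^{2X}g(n)\,g(n+l).
\]

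Next I would apply Lemma~\ref{Shiutheorem} to $g$. As $g$ is $0$--$1$ valued, its hypotheses hold with $\alpha=\beta=1$ (indeed $g(p^{v})\le 1^{v}$ and $g(n)\le 1\le n^{\epsilon^{3}/1000}$), so taking $Y=X$ and $H=h$ in~\eqref{shiu2} gives
\[
\sum_{l=1}^{h}\sum_{n=X}^{2X}g(n)\,g(n+l)\ \ll_{\epsilon}\ hX\prod_{p\le X}\Big(1+\frac{g(p)-1}{p}\Big)^{2}.
\]
Since $g(p)=1$ for every $p\notin[P,Q]$, all of those factors equal $1$, leaving
\[
\prod_{p\le X}\Big(1+\frac{g(p)-1}{p}\Big)\ =\ \prod_{P\le p\le Q}\Big(1-\frac1p\Big)\ \ll\ \frac{\log P}{\log Q}
\]
by Mertens' third theorem (applicable since $P,Q\to\infty$ with $X$) --- this is exactly the estimate already used above to bound the number of integers of $[X,2X]$ lying outside $S_{d}$. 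Squaring and substituting into the previous display yields the asserted bound $\ll_{\epsilon}hX(\log P/\log Q)^{2}$.

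The argument is routine, so I do not expect a genuine obstacle; the only points needing attention are the verification, just carried out, that $g$ meets the normalization required by Lemma~\ref{Shiutheorem}, and the admissible range $1\le H\le X$ there, which restricts the derivation to $h\le X$. This is harmless, since $h\le X^{1-\epsilon}$ in every application of the lemma. If one preferred to avoid quoting Shiu's theorem in shifted form, the same bound also follows from a direct upper-bound sieve based on the identity $g(n)=\sum_{d\mid n,\ p\mid d\Rightarrow p\in[P,Q]}\mu(d)$, summing over the (necessarily short) squarefree divisors $d$ supported on primes of $[P,Q]$, but invoking~\eqref{shiu2} is by far the quickest route.
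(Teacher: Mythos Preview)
Your proof is correct and follows essentially the same route as the paper, which simply writes ``This comes from a direct application of \eqref{shiu2} by substituting $1_{S'}(n)$ into $g(n)$.'' You have been slightly more careful than the paper in first dominating $1_{S'}$ by the genuinely multiplicative function $g$ (since $1_{S'}$ itself carries the interval restriction $[X,2X]$ and is not literally multiplicative), and in recording the Mertens computation for the Euler product; these are exactly the details one would fill in when unpacking the paper's one-line proof.
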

\begin{proof}
This arises from a direct application of \eqref{shiu2} with  $g(n)=1_{S'}(n)$.
\end{proof}
\noindent
The following lemma will be needed after Lemma \ref{Lemma 3.6}.
\begin{Lemma}\label{Lemma23} \rm{(Dirichlet mean value theorem)} For any $\{a_{n}\}_{n \in N} \subset \mathbb{C},$ 
\begin{equation}\int_{0}^{T} \left| \sum_{n=X}^{2X} \frac{a_{n}}{n^{it}} \right|^{2}dt \ll (T+X) \sum_{n=X}^{2X} |a_{n}|^{2}. \end{equation}
\end{Lemma}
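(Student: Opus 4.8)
The plan is to expand the square, integrate term by term, and control the resulting off-diagonal sum by the Montgomery-Vaughan generalization of Hilbert's inequality; this is the classical proof of the mean value theorem for Dirichlet polynomials (see e.g. Montgomery, \emph{Topics in Multiplicative Number Theory}, or Iwaniec-Kowalski), so the main work is just bookkeeping plus one elementary separation estimate.

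First I would write $n^{-it}=e^{-it\log n}$ and expand, obtaining
$$\int_{0}^{T} \Big| \sum_{n=X}^{2X} \frac{a_{n}}{n^{it}} \Big|^{2}dt = \sum_{X\le m,n\le 2X} a_{m}\overline{a_{n}} \int_{0}^{T} \Big(\frac{n}{m}\Big)^{it}dt = T\sum_{n=X}^{2X}|a_{n}|^{2} + \sum_{\substack{X\le m,n\le 2X\\ m\ne n}} a_{m}\overline{a_{n}}\,\frac{(n/m)^{iT}-1}{i\log(n/m)}.$$
The diagonal contributes exactly the main term $T\sum|a_{n}|^{2}$, so it remains to bound the off-diagonal sum by $O\big(X\sum|a_{n}|^{2}\big)$. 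Splitting the numerator $(n/m)^{iT}-1$ into its two pieces, the ``$-1$'' piece leaves $\sum_{m\ne n} a_{m}\overline{a_{n}}/(i\log(n/m))$, while for the ``$(n/m)^{iT}$'' piece, setting $c_{n}:=a_{n}n^{-iT}$ (so that $|c_{n}|=|a_{n}|$ for real $T$) gives $\sum_{m\ne n} c_{m}\overline{c_{n}}/(i\log(n/m))$, which has the same shape.

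Both sums are of the form $\sum_{m\ne n} x_{m}\overline{x_{n}}/(\lambda_{m}-\lambda_{n})$ with $\lambda_{n}=\log n$ and $|x_{n}|=|a_{n}|$. The frequencies are separated: for $m<n$ in $[X,2X]$ one has $\lambda_{n}-\lambda_{m}=\log(n/m)\ge \log(1+\tfrac1m)\ge \tfrac{1}{m+1}\gg \tfrac1X$, so the nearest-neighbour gap $\delta_{n}$ satisfies $\delta_{n}^{-1}\ll X$ uniformly in $n$. The Montgomery-Vaughan form of Hilbert's inequality then yields $\big|\sum_{m\ne n} x_{m}\overline{x_{n}}/(\lambda_{m}-\lambda_{n})\big|\le \pi\sum_{n}\delta_{n}^{-1}|x_{n}|^{2}\ll X\sum_{n}|a_{n}|^{2}$, and the same bound for the $c_{n}$ sum. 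Combining the diagonal and off-diagonal estimates gives the claimed $(T+X)\sum_{n=X}^{2X}|a_{n}|^{2}$.

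The only genuine input beyond termwise integration is the Montgomery-Vaughan refinement of Hilbert's inequality (the naive triangle-inequality estimate would cost an extra factor $\log X$). If one prefers to avoid quoting it, an alternative is to dominate $\int_{0}^{T}$ by $\int_{-\infty}^{\infty}|\cdot|^{2}w(t)\,dt$ for a majorant $w\ge \mathbf 1_{[0,T]}$ whose Fourier transform is supported in $(-c/X,c/X)$ with total mass $\ll T+X$; since the log-frequencies $\log n$ are $\gg 1/X$-separated, only the diagonal survives, and the Beurling-Selberg extremal function supplies such a $w$. Either way the estimate $\log(n/m)\gg 1/X$ for distinct $m,n\in[X,2X]$ is what drives the $X$ in the bound, and that is the one step deserving a moment's care.
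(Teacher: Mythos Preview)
Your argument is correct and is precisely the standard proof of the mean value theorem for Dirichlet polynomials; the paper itself does not reprove the lemma but simply cites \cite[Theorem 9.2]{Montgomery1}, where the same Montgomery--Vaughan/Hilbert-inequality argument you outline is given. So you have supplied the details behind the paper's citation rather than offered a different route.
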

\begin{proof}
See \cite[Theorem 9.2]{Montgomery1}.
\end{proof}
\section{Proof of Theorems and corollaries}
The following lemma states that $\mathcal{A}_{F}(m,1,1...,1) $ satisfies the divisor bound, under $\bf{GRP}$.
\begin{Lemma}\label{Lemma 3.1} Assume $\bf{GRP}$.   Then for any $k \in \mathbb{N}$ and $p \in \mathbf{P},$ $$\mathcal{A}_{F}(p^{k},1,1...,1) \leq d_{n}(p^{k}).$$ \end{Lemma}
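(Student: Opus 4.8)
The plan is to use the Hecke relations together with the Euler factorization in \eqref{37} to express $\mathcal{A}_F(p^k,1,\dots,1)$ in terms of the Satake parameters $a_{F,1}(p),\dots,a_{F,n}(p)$, and then invoke $\mathbf{GRP}$ \eqref{38} to bound everything uniformly. Concretely, reading off the local factor at $p$ in \eqref{37},
\begin{equation}
\sum_{k=0}^{\infty} \mathcal{A}_F(p^k,1,\dots,1)\, T^k \;=\; \prod_{j=1}^{n} \bigl(1-a_{F,j}(p)T\bigr)^{-1},
\nonumber
\end{equation}
as a formal power series in $T$. Comparing coefficients of $T^k$ gives that $\mathcal{A}_F(p^k,1,\dots,1)$ equals the complete homogeneous symmetric polynomial $h_k\bigl(a_{F,1}(p),\dots,a_{F,n}(p)\bigr)$, i.e.\ the sum $\sum_{i_1+\cdots+i_n=k} a_{F,1}(p)^{i_1}\cdots a_{F,n}(p)^{i_n}$ over all tuples of nonnegative integers $i_1,\dots,i_n$ summing to $k$.

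First I would record this identity $\mathcal{A}_F(p^k,1,\dots,1)=h_k(a_{F,1}(p),\dots,a_{F,n}(p))$ — this is standard, following from the definition of the dual/standard $L$-function and the Hecke relations as set up in \cite[Chapter 9]{Go}. Next, under $\mathbf{GRP}$ we have $|a_{F,j}(p)|=1$ for every $j$ and every $p$, so by the triangle inequality
\begin{equation}
\bigl|\mathcal{A}_F(p^k,1,\dots,1)\bigr| \;\le\; \#\{(i_1,\dots,i_n)\in\mathbb{Z}_{\ge 0}^n : i_1+\cdots+i_n=k\} \;=\; \binom{n+k-1}{n-1}.
\nonumber
\end{equation}
Since $n$ is fixed, $\binom{n+k-1}{n-1}$ is a polynomial in $k$ of degree $n-1$; but in fact this bound is still $k$-dependent, so to get the clean statement $\mathcal{A}_F(p^k,1,\dots,1)=O_n(1)$ as written, I would instead note that for the purposes of this paper $k$ is bounded (in the application inside Lemma \ref{Lemma12} only $k\ge 2$ with $p^k$ in a fixed range appears, or more robustly one observes that the divisor-bound phrasing $\mathcal{A}_F(m,1,\dots,1)\ll_\epsilon m^\epsilon$ of \eqref{314} already subsumes it). If the intended reading is genuinely uniform in $k$, the honest statement is $\mathcal{A}_F(p^k,1,\dots,1)\ll_n k^{n-1}$, and I would state it that way; the $O_n(1)$ notation should be understood as "$O$ depending on $n$ and $k$," which is how it is used downstream.

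The only real subtlety — and the step I would be most careful about — is pinning down the precise symmetric-function identity and the normalization conventions: one must make sure that the $\mathcal{A}_F(p^k,1,\dots,1)$ appearing in the Fourier expansion agrees (via the Hecke relations of \cite[Theorem 9.3.11]{Go} or the Shintani/Casselman–Shalika formula) with the coefficient extracted from the Euler product \eqref{37}, rather than with some twisted or transposed variant. Once that bookkeeping is settled, the rest is immediate from $|a_{F,j}(p)|=1$. No analytic input is needed; this is purely the combinatorics of the local Hecke algebra together with the Ramanujan bound on Satake parameters.
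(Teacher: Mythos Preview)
Your approach is the same as the paper's in spirit: identify $\mathcal{A}_F(p^k,1,\dots,1)$ with a symmetric polynomial in the Satake parameters and then invoke $|a_{F,j}(p)|=1$. The paper routes this through the Schur polynomial $s_{(k)}$ and the dual Jacobi--Trudi identity, writing $s_{(k)}$ as a determinant in the elementary symmetric polynomials $e_i$; you instead read off directly from the local Euler factor that $\mathcal{A}_F(p^k,1,\dots,1)=h_k\bigl(a_{F,1}(p),\dots,a_{F,n}(p)\bigr)$ and bound by the monomial count $\binom{n+k-1}{n-1}$.

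Your route is in fact the honest one. The dual Jacobi--Trudi determinant for $s_{(k)}$ is $k\times k$, not of size bounded in terms of $n$; the paper's step ``$\lambda_i^t-i+j\le 2n$'' together with the bound $n!(2n)^{n^2}$ tacitly treats the matrix as at most $n\times n$, which it is not. Indeed the uniform-in-$k$ claim $O_n(1)$ is false already for $n=2$, where $h_k(e^{i\theta},e^{-i\theta})=\sin\bigl((k{+}1)\theta\bigr)/\sin\theta$ can be as large as $k{+}1$. Your bound $\ll_n k^{n-1}$ is the correct statement, and --- as you observe --- it is entirely sufficient for the only downstream use in Lemma~\ref{Lemma12}, since $\sum_{k\ge 2} k^{\,n-1} p^{-k}\ll_n p^{-2}$. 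So your proposal is sound; keep the explicit $k$-dependence rather than writing $O_n(1)$.
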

\begin{proof}
By the Euler product of $L_{F}(s),$
$$\mathcal{A}_{F}\left(p^k,1,1,...,1\right)=\sum_{\substack{r_1, \ldots, r_n \geq 0 \\ r_1+\cdots+r_n=k}} \prod_{1 \leq j \leq n} a_{F,j}(p)^{r_j}.$$
By using $\bf{GRP},$ we have
\begin{equation}\sum_{\substack{r_1, \ldots, r_n \geq 0 \\ r_1+\cdots+r_n=k}} \prod_{1 \leq j \leq n} a_{F,j}(p)^{r_j} 
\leq \sum_{\substack{r_1, \ldots, r_n \geq 0 \\ r_1+\cdots+r_n=k}} 1
\leq d_{n}(p^{k}).\nonumber \end{equation}
 \end{proof}
 Note that $\mathcal{A}_{F}(p,1,...,1) \leq d_{n}(p)=n.$
\noindent In order to apply Lemma \ref{Shiutheorem}, we also need to show that for arbitrarily small fixed $\epsilon>0,$
\begin{equation}\label{314} \mathcal{A}_{F}(m,1,...,1) \ll_{\epsilon} m^{\epsilon}. \end{equation}
By the Euler product \eqref{37} and $\bf{GRP},$ we see that
\begin{equation} \mathcal{A}_{F}(m,1,...,1) \ll d_{n}(m).  \nonumber\end{equation}
Given that $d_{n}(m) \leq d_{2}(m)^{n}$, and utilizing $d_{2}(m) \leq e^{(\log 2 +o(1))\frac{ \log m}{ \log \log m}}$, we can establish \eqref{314}.
By applying Lemma \ref{Shiutheorem} along with the bound $\mathcal{A}_{F}(p,1,...,1) \leq n,$ we have
\begin{equation}\label{315} \sum_{m=1}^{X} |\mathcal{A}_{F}(m,1,...,1)|^{4} \ll X (\log X)^{n^{4}-1}. \end{equation}
\subsection{Proof of theorem \ref{Theo1}}
This subsection is devoted to proving Theorem \ref{Theo1}.
\noindent In order to prove Theorem \ref{Theo1}, we need the following lemmas. 
\begin{Lemma}\label{Lemma33} For sufficiently large $X,$
\begin{equation}\label{317}\sum_{p=1}^{X} \frac{|\mathcal{A}_{F}(p,1,...,1)|^{2}}{p}= \sum_{p=1}^{X} \frac{1}{p} + O_{F}(1). \end{equation}
\end{Lemma}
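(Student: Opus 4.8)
The plan is to read the claimed identity off the behaviour at $s=1$ of the Rankin--Selberg $L$-function $L(s,F\times\widetilde{F})$. Recall from standard Rankin--Selberg theory (see, e.g., \cite{Go}) that for $\Re s>1$ this $L$-function has the absolutely convergent Euler product $\prod_{p}\prod_{1\le j,l\le n}\bigl(1-a_{F,j}(p)\,\overline{a_{F,l}(p)}\,p^{-s}\bigr)^{-1}$, that it continues meromorphically, and that its only pole is a simple pole at $s=1$ with a positive residue $\rho_{F}$. Since moreover $L(s,F\times\widetilde{F})$ and $\zeta(s)$ are nonzero for real $s>1$ (by their Euler products), the quotient $h(s):=L(s,F\times\widetilde{F})/\zeta(s)$ extends continuously to $s=1$ with $h(1)=\rho_{F}>0$; hence $\log h(s)=\log L(s,F\times\widetilde{F})-\log\zeta(s)$ is bounded for $s\in(1,2]$. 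None of this uses $\bf{GVK}$ or $\bf{GRP}$.

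Next I would take logarithms of the two Euler products. Since $\mathcal{A}_{F}(p,1,\dots,1)=\sum_{j=1}^{n}a_{F,j}(p)$ by \eqref{37},
\[
\log L(s,F\times\widetilde{F})=\sum_{p}\sum_{k\ge1}\frac{1}{kp^{ks}}\Bigl|\sum_{j=1}^{n}a_{F,j}(p)^{k}\Bigr|^{2},\qquad \log\zeta(s)=\sum_{p}\sum_{k\ge1}\frac{1}{kp^{ks}},
\]
so the $k=1$ terms contribute $|\mathcal{A}_{F}(p,1,\dots,1)|^{2}$ and $1$, respectively. Under $\bf{GRP}$ one has $|a_{F,j}(p)|=1$, hence $\bigl|\sum_{j}a_{F,j}(p)^{k}\bigr|^{2}\le n^{2}$ for every $k$, so the contribution of the prime powers with $k\ge2$ to each logarithm converges absolutely and uniformly for $\Re s\ge3/4$ and is in particular $O_{F}(1)$ on $(1,2]$. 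Subtracting,
\[
\sum_{p}\frac{|\mathcal{A}_{F}(p,1,\dots,1)|^{2}-1}{p^{s}}=\log h(s)+O_{F}(1)=O_{F}(1)\qquad\text{uniformly for }1<s\le2.
\]

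It remains to pass from this to the partial sum over $p\le X$. Write $b_{p}:=|\mathcal{A}_{F}(p,1,\dots,1)|^{2}-1$, so $|b_{p}|\le n^{2}$ by $\bf{GRP}$, and take $s=1+1/\log X$ in the last display to get $\sum_{p}\frac{b_{p}}{p}e^{-\log p/\log X}=O_{F}(1)$. The smooth weight can be removed cheaply: the tail $p>X$ contributes $\le n^{2}\sum_{p>X}p^{-1-1/\log X}\ll_{n}1$ (by partial summation against $\pi(t)\ll t/\log t$), while replacing $e^{-\log p/\log X}$ by $1$ on $p\le X$ costs $\le\sum_{p\le X}\frac{|b_{p}|\log p}{p\log X}\le\frac{n^{2}}{\log X}\sum_{p\le X}\frac{\log p}{p}\ll_{n}1$ by Mertens' estimate $\sum_{p\le X}\frac{\log p}{p}=\log X+O(1)$. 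Hence $\sum_{p\le X}\frac{b_{p}}{p}=O_{F}(1)$, and adding $\sum_{p\le X}\frac1p$ to both sides yields the lemma. Alternatively one may invoke the prime number theorem $\sum_{p\le X}|\mathcal{A}_{F}(p,1,\dots,1)|^{2}\log p=X+O\bigl(Xe^{-c\sqrt{\log X}}\bigr)$ --- the quantitative form of the estimate quoted after \eqref{1122}, available because $-L'/L(s,F\times\widetilde{F})$ has nonnegative Dirichlet coefficients and hence $L(s,F\times\widetilde{F})$ enjoys a classical zero-free region --- and finish by partial summation together with Mertens' theorem for $\sum_{p\le X}1/p$.

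The logarithmic expansions, the bound on the $k\ge2$ tail, and the weight-removal are all routine. The one genuine ingredient is the analytic input that $L(s,F\times\widetilde{F})$ continues past $\Re s=1$ with a simple pole at $s=1$ of nonzero residue --- classical Rankin--Selberg theory, independent of $\bf{GVK}$ and $\bf{GRP}$ --- so the main thing to be careful about is quoting that correctly and checking that the higher prime-power terms are honestly $O_{F}(1)$ near $s=1$; here $\bf{GRP}$ is needed only for the bound $\bigl|\sum_{j}a_{F,j}(p)^{k}\bigr|^{2}\le n^{2}$ and could be relaxed to any $|a_{F,j}(p)|\ll p^{\theta}$ with $\theta<1/2$.
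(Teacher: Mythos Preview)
Your argument is correct (under \textbf{GRP}, which the paper assumes throughout), and your alternative at the end is exactly the paper's route. The paper invokes the classical zero-free region for $L_{F\times\tilde F}$ together with \cite[Theorem~5.13]{IK1} to obtain the prime number theorem $\sum_{p\le X}|\mathcal{A}_F(p,1,\dots,1)|^2\log p = X+o_F(X)$ and then applies partial summation; this is the same as your final paragraph.

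Your main argument is genuinely different and more elementary: rather than going through the PNT machinery (zero-free region, contour shift), you compare $\log L(s,F\times\tilde F)$ with $\log\zeta(s)$ for real $s\to1^+$, observe that the simple poles cancel so the difference stays bounded, isolate the $k=1$ term via the Euler product, and then remove the smoothing weight $p^{-1/\log X}$ by a direct estimate. This avoids any appeal to zeros of $L_{F\times\tilde F}$, at the price of needing a pointwise bound on the Satake parameters to control the $k\ge2$ tail of the logarithm. The paper's route, by contrast, uses analytic continuation and a zero-free region but in principle needs less at prime powers (the prime-power contribution to $\sum_{n\le X}\Lambda_{F\times\tilde F}(n)$ can be handled with weaker input).

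One small correction to your closing remark: relaxing to $|a_{F,j}(p)|\ll p^{\theta}$ with $\theta<1/2$ is not quite enough for your logarithm argument. The $k=2$ contribution to $\log L(s,F\times\tilde F)$ is bounded by $n^2\sum_p p^{4\theta-2s}$, and for this to converge uniformly as $s\to1^+$ you need $4\theta<1$, i.e.\ $\theta<1/4$. Under \textbf{GRP} (the setting of the paper) this is of course moot.
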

\begin{proof}
The proof of this theorem is essentially in line with \cite[Theorem 5.13]{IK1}.
Let $\tilde{F}$ be the dual of $F.$ The Rankin-Selberg L-function is given by
\begin{equation} L_{F \times \tilde{F}}(s)=\sum_{n=1}^{\infty} \frac{|\mathcal{A}_{F}(n,1,...,1)|^{2}}{n^{s}}, \end{equation} and it is meromorphic with a simple pole at $s=1.$ Furthermore, there exists a constant $d_{F}>0$ such that 
\begin{equation} L_{F \times \tilde{F}}(\sigma+it) \neq 0\end{equation} for $\sigma >1-\frac{d_{F}}{(\log |t|+3)}$ with $|t|\gg 1$ (see \cite{HB}).
Therefore, by using \cite[Theorem 5.13]{IK1}, we have 
$$\sum_{p=1}^{X} |\mathcal{A}_{F}(p,1,...,1)|^{2} \Lambda(p) = x+ o_{F}(x)$$ 
where $\Lambda$ is the von-Mangoldt function. Hence, applying partial summation over $p$ gives \eqref{317}.

\end{proof}

\begin{Lemma}\label{Lemma34} Assume $\bf{GRP}$ and $\bf{GVK}$. Let $(\log X)^{n^{4}-1/3-\epsilon} \ll_{\epsilon} h =o(X).$ Then
\begin{equation}\label{320} \int_{X}^{2X} \left|\frac{1}{h} \sum_{m=x}^{x+h} \mathcal{A}_{F}(m,1,...,1)1_{S'}(m)\right|^{2} dx \ll_{F,\epsilon} X (\log X)^{-2/3+\epsilon}. \end{equation}
\end{Lemma}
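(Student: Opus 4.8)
The plan is to expand the square, reduce the mean-square over $x$ to a shifted sum of $\mathcal{A}_F(m,1,\dots,1)\overline{\mathcal{A}_F(m+l,1,\dots,1)}$ restricted to the ``atypical'' set $S'$, and then bound that shifted sum using the sparsity of $S'$ together with the divisor bound \eqref{314}. Concretely, after squaring out the inner sum in \eqref{320}, the left-hand side becomes
\begin{equation}
\frac{1}{h^{2}} \int_{X}^{2X} \sum_{|l| \leq h} \Big(\sum_{\substack{m}} \mathcal{A}_{F}(m,1,\dots,1)1_{S'}(m)\overline{\mathcal{A}_{F}(m+l,1,\dots,1)}1_{S'}(m+l)\Big)\,dx, \nonumber
\end{equation}
where the inner sum runs over $m$ in an interval of length $h-|l|$ determined by $x$. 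Interchanging the integral and the sum over $l$, and noting that each unordered pair $(m,m+l)$ is counted with weight $O(h)$ as $x$ varies, this is bounded by
\begin{equation}
\frac{1}{h} \sum_{l=1}^{h} \sum_{m=X}^{2X} |\mathcal{A}_{F}(m,1,\dots,1)|\, 1_{S'}(m)\, |\mathcal{A}_{F}(m+l,1,\dots,1)|\, 1_{S'}(m+l) \;+\; \text{(diagonal }l=0\text{)}. \nonumber
\end{equation}

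For the diagonal term $l=0$, one gets $\frac{1}{h}\sum_{m=X}^{2X} |\mathcal{A}_F(m,1,\dots,1)|^{2}1_{S'}(m)$, which by Cauchy--Schwarz is at most $\big(\frac{1}{h}\sum_m |\mathcal{A}_F(m,1,\dots,1)|^{4}\big)^{1/2}\big(\frac{1}{h}|S'\cap[X,2X]|\big)^{1/2}$; using \eqref{315} and $|S'\cap[X,2X]| \ll X(\log X)^{-1/3+\epsilon}$ from (2.4) this is $\ll \frac{X}{h}(\log X)^{O(1)}$, which is acceptable since $h \gg (\log X)^{n^4-1}$ — actually one must be slightly more careful, so the cleaner route is to apply the Shiu-type bound \eqref{shiu2} directly to $g(m) = |\mathcal{A}_F(m,1,\dots,1)|1_{S'}(m)$ (note $g$ is not multiplicative, so instead apply Cauchy--Schwarz to split into $|\mathcal{A}_F|^2$ and $1_{S'}$, then use \eqref{315} and Lemma \ref{Lemma22}). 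For the off-diagonal terms, Cauchy--Schwarz in $m$ and $l$ gives
\begin{equation}
\frac{1}{h}\sum_{l=1}^{h}\sum_{m=X}^{2X} |\mathcal{A}_F(m,\dots)|\,|\mathcal{A}_F(m+l,\dots)|\,1_{S'}(m)1_{S'}(m+l) \leq \frac{1}{h}\Big(\sum_{l,m} |\mathcal{A}_F(m)|^2|\mathcal{A}_F(m+l)|^2\Big)^{1/2}\Big(\sum_{l,m}1_{S'}(m)1_{S'}(m+l)\Big)^{1/2}, \nonumber
\end{equation}
where the first factor is $\ll (h X (\log X)^{O(1)})^{1/2}$ by \eqref{shiu2} applied to $|\mathcal{A}_F|^{2}$ (legitimate since $\mathcal{A}_F(p)^2 \ll n^2$ and \eqref{314} holds), and the second factor is $\ll (hX(\log P/\log Q)^{2})^{1/2} = (hX(\log X)^{-2/3+\epsilon})^{1/2}$ by Lemma \ref{Lemma22}. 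Multiplying, the off-diagonal contribution is $\ll \frac{1}{h}\cdot hX(\log X)^{-1/3+\epsilon/2}(\log X)^{O(1)}$; here the exponent $-\tfrac13$ is not yet $-\tfrac23$, so the gain from $1_{S'}$ must be used \emph{twice} rather than split against the trivial bound on $|\mathcal{A}_F|^2$ — that is, one should instead pair one factor of $1_{S'}$ with the other via Lemma \ref{Lemma22} and \emph{not} lose $(\log X)^{O(1)}$ from the Hecke eigenvalue side, which forces a slightly different bookkeeping.

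The main obstacle, and the step I expect to require the most care, is precisely getting the \emph{full} saving $(\log X)^{-2/3+\epsilon}$ rather than only $(\log X)^{-1/3+\epsilon}$: the naive Cauchy--Schwarz above spends the $1_{S'}$-sparsity only once (giving one factor of $\log P/\log Q$) after taking a square root, so one must arrange the estimate so that the density bound $|S'| \ll X \log P/\log Q$ is used on \emph{both} $1_{S'}(m)$ and $1_{S'}(m+l)$ without a square root eating half of it. The correct approach is to apply \eqref{shiu2} to the single non-negative function $g(m)=|\mathcal{A}_F(m,1,\dots,1)|\,1_{S'}(m)$ after bounding $|\mathcal{A}_F(m,1,\dots,1)|$ crudely where convenient, or — cleaner still — to use $1_{S'}(m)^2 = 1_{S'}(m)$ and split as $|\mathcal{A}_F(m)|^2 1_{S'}(m) \cdot 1_{S'}(m+l)$ versus $1_{S'}(m)\cdot |\mathcal{A}_F(m+l)|^2 1_{S'}(m+l)$, each handled by \eqref{shiu2}; since the bound $\prod_{p\le X}(1+(g(p)-1)/p) \ll \log P/\log Q$ holds for $g = 1_{S'}$ (as in (2.2)), each such application yields one factor $\log P/\log Q = (\log X)^{-1/3+\epsilon}$, and the two together give $(\log X)^{-2/3+\epsilon}$ as claimed, with the bounded contribution of $|\mathcal{A}_F(p)|$ only affecting the implied constant and an arbitrarily small power of $\log X$ absorbed into the $\epsilon$. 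The diagonal term is then genuinely lower order under the hypothesis $h \gg (\log X)^{n^4-1}$, which is exactly why that lower bound on $h$ appears.
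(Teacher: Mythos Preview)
Your approach is correct but takes a different route from the paper. The paper first applies Cauchy--Schwarz to the inner sum, bounding $\big|\sum_m \mathcal{A}_F 1_{S'}\big|^2 \leq \big(\sum_m 1_{S'}\big)\big(\sum_m |\mathcal{A}_F|^2 1_{S'}\big)$, and then applies Cauchy--Schwarz again to the integral in $x$, producing two factors $\big(\int(\sum 1_{S'})^2\,dx\big)^{1/2}$ and $\big(\int(\sum|\mathcal{A}_F|^2 1_{S'})^2\,dx\big)^{1/2}$. The first is controlled by Lemma~\ref{Lemma22}; the second by expanding the square and applying \eqref{shiu2} to the multiplicative function $|\mathcal{A}_F|^2\,1_{S'}$, where Lemma~\ref{Lemma33} is invoked to make the $|\mathcal{A}_F(p)|^2$-part of the Euler product $O_F(1)$. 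Each factor contributes one power of $\log P/\log Q$, and their product gives $(\log X)^{-2/3+\epsilon}$; the fourth-moment diagonal uses \eqref{315} and is where the hypothesis $h\gg(\log X)^{n^4-1}$ enters.

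Your route---expand the square at the outset and apply \eqref{shiu2} directly to $g(m)=|\mathcal{A}_F(m,1,\dots,1)|\,1_{S'}(m)$---is more economical: the square in \eqref{shiu2} delivers both factors of $\log P/\log Q$ in a single stroke. Two points to tighten. First, you must justify that $\prod_{p\leq X,\,p\notin[P,Q]}\big(1+(|\mathcal{A}_F(p)|-1)/p\big)=O_F(1)$; this does \emph{not} follow from the mere boundedness $|\mathcal{A}_F(p)|\leq n$ (which would give a power of $\log X$), but from Lemma~\ref{Lemma33} via the elementary inequality $|\mathcal{A}_F(p)|-1\leq(|\mathcal{A}_F(p)|^2-1)/2$. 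Second, your ``cleaner still'' alternative---splitting into $|\mathcal{A}_F(m)|^2 1_{S'}(m)\cdot 1_{S'}(m+l)$---is not of the form $g(m)g(m+l)$ for a single multiplicative $g$, so \eqref{shiu2} as stated does not apply; drop that branch and keep the direct application to $|\mathcal{A}_F|\,1_{S'}$. The initial Cauchy--Schwarz attempt yielding only $(\log X)^{-1/3}$ is instructive but can be removed from the final write-up.
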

\begin{proof}

By using the Cauchy-Schwarz inequality, we see that 
\begin{equation}\label{321} \begin{split} \int_{X}^{2X} & \left|\frac{\sum_{m=x}^{x+h} \mathcal{A}_{F}(m,1,...1)1_{S'}(m)}{h}\right|^{2} dx \\&\ll  \frac{1}{h^{2}} \int_{X}^{2X} \left|\sum_{m=x}^{x+h} 1_{S'}(m)\right| \left| \sum_{m=x}^{x+h} |\mathcal{A}_{F}(m,1,...1)1_{S'}(m)|^{2} \right| dx \\&\ll \frac{1}{h^{2}} \left( \int_{X}^{2X} \left( \sum_{m=x}^{x+h} 1_{S'}(m) \right)^{2} dx \right)^{\frac{1}{2}} \left( \int_{X}^{2X} \left( \sum_{m=x}^{x+h} |\mathcal{A}_{F}(m,1,...1)1_{S'}(m)|^{2} \right)^{2} dx \right)^{\frac{1}{2}}.
\end{split}\end{equation}
By squaring out the inner sum over $m$ and using Lemma \ref{Lemma22}, we have
\begin{equation} \begin{split} \int_{X}^{2X} \left( \sum_{m=x}^{x+h} 1_{S'}(m) \right)^{2} dx &\ll h \left(\sum_{l=1}^{h}\sum_{m=X}^{2X} 1_{S'}(m)1_{S'}(m+l) + \sum_{m=X}^{2X+h} 1_{S'}(m) \right) \\ &\ll h^{2}X \left(\frac{\log P}{\log Q}\right)^{2} +hX \frac{\log P}{\log Q}\\ &\ll_{\epsilon} h^{2}X (\log X)^{-2/3+2\epsilon}. \end{split} \end{equation}
In addition, by squaring out the last integrand in \eqref{321}, we see that 
\begin{equation}\begin{split}  \int_{X}^{2X} \Big(\sum_{m=x}^{x+h} &|\mathcal{A}_{F}(m,1,...1)1_{S'}(m)|^{2} \Big)^{2} dx \ll h\sum_{m=X}^{2X+h} |\mathcal{A}_{F}(m,1,...1)1_{S'}(m)|^{4} \\&+ h\sum_{l=1}^{h} \sum_{m=X}^{2X} |\mathcal{A}_{F}(m,1,...1)1_{S'}(m)|^{2} |\mathcal{A}_{F}(m+l,1,...,1)1_{S'}(m+l)|^{2}.\end{split}\end{equation}
By applying Lemma \ref{Shiutheorem} and \ref{Lemma33}, we have 
$$h\sum_{l=1}^{h} \sum_{m=X}^{2X} |\mathcal{A}_{F}(m,1,...1)1_{S'}(m)|^{2} |\mathcal{A}_{F}(m+l,1,...,1)1_{S'}(m+l)|^{2} \ll_{F,\epsilon} h^{2}(2X+h)(\log X)^{-2/3+2\epsilon}.$$
Hence, by using the assumption $(\log X)^{n^{4}-1/3-\epsilon} \ll h=o(X)$ and \eqref{315}, we see that 
$$  \int_{X}^{2X} \big| \sum_{m=x}^{x+h} \mathcal{A}_{F}(m,1,...1)^{2}1_{S'}(m) \big|^{2} dx \ll_{F,\epsilon} h^{2}X(\log X)^{-2/3+\epsilon}.$$ The proof is completed.
\end{proof}
\begin{Rem}In order to obtain results similar to \eqref{320}, one may avoid assuming $\bf{GRP}$ by employing certain large sieve techniques when $h$ is bigger than $X^{\epsilon}.$  However, the resulting upper bounds will be weaker than \eqref{320} (see \cite[Theorem 7.14]{IK1}). 
\end{Rem}

\begin{Lemma}\label{Lemma 3.6} \rm{(Parseval's bound)}
Let $X$ and $A$ be sufficiently large. Let $e^{(\log X)^{1-\epsilon}} \ll_{\epsilon} h_{1} \leq h_{2}=O_{\epsilon}(X^{1-\epsilon}).$ 
Then
\begin{equation}
\frac{1}{X} \int_{X}^{2X} \left|\frac{1}{h_{1}}\sum_{m=x}^{x+h_{1}} \mathcal{A}_{F}(m,1,...,1) 1_{S}(m) - \frac{1}{h_{2}}\sum_{m=x}^{x+h_{2}} \mathcal{A}_{F}(m,1,...,1) 1_{S}(m)  \right|^{2}dx \end{equation}
\begin{equation} \begin{split} \ll\max_{i=1.2}& \Big( \int_{ e^{(\log X)^{2\epsilon}}}^{Xh_{i}^{-1}} \Big|\sum_{m=X}^{2X} \frac{\mathcal{A}_{F}(m,1,...,1)1_{S}(m)}{m^{1+it}}\Big|^{2}dt +\\& \max_{T>Xh_{i}^{-1}} \frac{Xh_{i}^{-1}}{T} \int_{T}^{2T} \Big|\sum_{m=X}^{2X} \frac{\mathcal{A}_{F}(m,1,...,1) 1_{S}(m)}{m^{1+it}}\Big|^{2}dt\Big) \\&+(\log X)^{-A}. \end{split} \nonumber \end{equation}

\end{Lemma}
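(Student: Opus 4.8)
The statement is the standard "Parseval / mean-value to Dirichlet-polynomial" reduction that one finds in the Matom\"aki–Radziwi\l\l{} circle of ideas (and precisely in \cite[Lemma 2]{Matomaki1}), now applied to the sifted coefficients $\mathcal A_F(m,1,\dots,1)1_S(m)$. The key point is that the difference of two averages of lengths $h_1,h_2$ of an arithmetic function $a(m)$ over $[X,2X]$ can be written, via a smooth-cutoff version of Perron's formula / Fourier inversion, in terms of the Dirichlet polynomial $D(it):=\sum_{m=X}^{2X} a(m)m^{-1-it}$ weighted by a kernel that decays like $\min(1,(h_i|t|)^{-1})$ away from $t=0$. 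Concretely, I would first record the identity
\begin{equation}
\frac1{h}\sum_{m=x}^{x+h} a(m) \;=\; \frac1{2\pi}\int_{-\infty}^{\infty} D_x(it)\,\widehat{K_h}(t)\,dt \nonumber
\end{equation}
(after replacing the sharp cutoff by a smooth one at negligible cost, the error being absorbed into the $(\log X)^{-A}$ term), where $\widehat{K_h}(t)\ll\min(1,1/(h|t|))$; subtracting the analogous expression for $h_2$ and using $|\widehat{K_{h_1}}(t)-\widehat{K_{h_2}}(t)|\ll\min(1,1/(h_1|t|))$ on the relevant range. Squaring, expanding, and integrating over $x\in[X,2X]$ then turns the left-hand side into an integral of $|D(it)|^2$ against $|\widehat{K_{h_1}}(t)-\widehat{K_{h_2}}(t)|^2$, up to a diagonal contribution.

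**Carrying it out.** The steps, in order: (1) smooth the indicator $1_{[x,x+h]}$ to a function agreeing with it except on intervals of length $h^{1-\delta}$ at the endpoints, and bound the resulting discrepancy using $\mathcal A_F(m,1,\dots,1)\ll_\epsilon m^\epsilon$ from \eqref{314} together with $h\gg e^{(\log X)^{1-\epsilon}}$, so that this error contributes $\ll(\log X)^{-A}$; (2) apply Fourier inversion to write each average as $\int D_x(it)\widehat{K_{h_i}}(t)\,dt$ and note $D_x(it)=e^{-ix\cdot 0}$-type shifts reduce everything to the single polynomial $D(it)$ evaluated with a phase $X^{it}$-factor; (3) use Plancherel in $x$ (or simply expand the square and integrate termwise, the cross terms giving $\int_X^{2X}X^{i(t-u)}\,dx\ll\min(X,|t-u|^{-1})$) to bound the mean square by $\int |D(it)|^2\,|\widehat{K_{h_1}}(t)-\widehat{K_{h_2}}(t)|^2\,dt$ plus lower-order terms; (4) split the $t$-integral at $|t|=e^{(\log X)^{2\epsilon}}$, at $|t|=X/h_i$, and dyadically for $|t|>X/h_i$. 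On $|t|\le e^{(\log X)^{2\epsilon}}$ one uses Lemma \ref{Lemma12} (the $\mathbf{GVK}$ input): since every $m\in S$ has a prime factor in $[P,Q]$, a Ramar\'e-type identity lets one extract from $D(it)$ a factor $\sum_{p\in[P,Q]}\mathcal A_F(p,1,\dots,1)p^{-1-it}$, which is $\ll(\log X)^{-A}$ on that range, so this piece is $\ll(\log X)^{-A}$ after also using the fourth-moment bound \eqref{315} for the remaining factor and $\int_0^{e^{(\log X)^{2\epsilon}}}dt$; on $e^{(\log X)^{2\epsilon}}<|t|<X/h_i$ one has $|\widehat{K_{h_1}}-\widehat{K_{h_2}}|^2\ll 1$ and gets the first displayed term; on each dyadic block $T<|t|<2T$ with $T>X/h_i$ one has $|\widehat{K_{h_1}}-\widehat{K_{h_2}}|^2\ll (X/(h_iT))\cdot(1/(h_iT))\ll (X/h_i)/(h_i T)\cdot$, giving after summation the $\max_{T>X/h_i}\frac{X/h_i}{T}\int_T^{2T}|D(it)|^2\,dt$ term; (5) collect, and also dispose of the $t$ near $0$ / the diagonal term $\ll 1/h_i$, which is $\ll(\log X)^{-A}$ by the lower bound on $h_i$.

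**Main obstacle.** The routine-looking but genuinely delicate point is step (1)–(2): getting an honest Parseval identity with a kernel of the stated shape $\min(1,1/(h|t|))$, rather than just an inequality with worse $t$-dependence, and making sure that the truncation of the Dirichlet integral to $|t|\le X/h_i$ (beyond which one only keeps the dyadic-average form) is legitimate. This is exactly where \cite[Lemma 2]{Matomaki1} does the careful bookkeeping, and I would follow that argument closely, taking $a(m)=\mathcal A_F(m,1,\dots,1)1_S(m)$ and using only the mild growth bound \eqref{314} to control smoothing errors; no arithmetic beyond that is needed at this stage, since all the genuine cancellation is deferred to where Lemma \ref{Lemma12} is invoked against the $[P,Q]$-prime factor guaranteed by $m\in S$. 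A secondary nuisance is that $D(it)$ carries an $X^{it}$ phase from the shift, but this is harmless as it has modulus $1$ and does not affect $|D(it)|^2$.
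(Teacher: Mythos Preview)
Your overall framework (Perron/Fourier inversion, Plancherel in $x$, then splitting the $t$-range against the kernel $\min(1,1/(h_i|t|))$) is correct and is exactly what the paper imports from \cite[Lemma 14]{MR3}. But your handling of the range $|t|\le e^{(\log X)^{2\epsilon}}$ in step (4) is a genuine gap.

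You propose to treat small $|t|$ by invoking Lemma~\ref{Lemma12} after a Ramar\'e factorization. This fails for two reasons. First, Lemma~\ref{Lemma12} carries the explicit hypothesis $e^{(\log X)^{2\epsilon}}\ll t$; for smaller $t$ the contour-shift in its proof breaks because the Perron truncation height $T=(|t|+1)/2$ is too small to control the error $R_F\ll Q_1^{\epsilon}/T$. Second, and more to the point, Lemma~\ref{Lemma 3.6} is stated and proved \emph{without} assuming $\mathbf{GVK}$: it is a purely analytic Parseval reduction. The Ramar\'e factorization and the application of Lemma~\ref{Lemma12} occur later in the paper (Lemma~\ref{Lemma38} and the argument around \eqref{335}), not here.

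The correct mechanism for small $|t|$ is the one you mention in passing but then abandon: the \emph{difference} of kernels vanishes to first order at $t=0$, so that $|\widehat{K_{h_1}}(t)-\widehat{K_{h_2}}(t)|\ll h_2|t|/X$ on this range, not merely $\ll 1$. Since $h_2\ll X^{1-\epsilon}$ and $|t|\le e^{(\log X)^{2\epsilon}}$, this is $\ll X^{-\epsilon/2}$; combined with the trivial bound $|D(it)|\ll(\log X)^{O(1)}$ from \eqref{314} and the length $e^{(\log X)^{2\epsilon}}$ of the range, the contribution is $\ll X^{-\epsilon/3}\ll(\log X)^{-A}$ with no arithmetic input. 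This first-order cancellation is precisely why one compares two scales $h_1,h_2$ rather than bounding a single short average.
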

\begin{proof}
The proof of this basically follows from \cite[Lemma 14]{MR3}.  The 1-bounded condition in \cite[Lemma 14]{MR3} can be readily eliminated (as shown in \cite[Lemma 2.1]{doi:10.1142/S1793042122500385}).
\end{proof}
\noindent Let $K$ be the set of integers $m \in S$ such that $p^{2} \nmid m$ for all $p\in [P,Q]\cap \mathbf{P},$ and let $K^{c}=\{ m\in S : m \notin K \}.$
Then 
\begin{equation}\label{325}\sum_{m=X}^{2X} \frac{ \mathcal{A}_{F}(m,1,...,1)1_{S}(m)}{m^{1+it}} = \sum_{m=X, \atop m \in K}^{2X}  \frac{ \mathcal{A}_{F}(m,1,...,1)1_{S}(m)}{m^{1+it}}  +  \sum_{m=X, \atop m \in K^{c}}^{2X}  \frac{ \mathcal{A}_{F}(m,1,...,1)1_{S}(m)}{m^{1+it}}.\end{equation}
The first summand on the right-hand side of \eqref{325} can be represented as
\begin{equation}\label{326}\sum_{p=P}^{Q} \frac{\mathcal{A}_{F}(p,1,...,1)}{p^{1+it}} \sum_{m_{1}=\frac{X}{p}, \atop pm_{1} \in K}^{\frac{2X}{p}} \frac{\mathcal{A}_{F}(m_{1},1,...,1)}{m_{1}^{1+it}}\frac{1}{|\{ q \in [P,Q]\cap \mathbf{P} : q |m_{1}\}|+1}\end{equation} (see \cite[section 17.3]{IK1}, or \cite[(9)]{MR3}).
\begin{Lemma} Assume $\bf{GRP}.$
Let $X,T$ be sufficiently large. Then
\begin{equation} \int_{0}^{T} \left| \sum_{m=X,\atop m\in K^{c}}^{2X} \frac{\mathcal{A}_{F}(m,1,...,1)1_{S}(m)}{m^{1+it}}\right|^{2} dt \ll_{F} 
\left(\frac{T}{X}+1\right) \frac{(\log X)^{\frac{n^{4}-1}{2}}}{P^{\frac{1}{2}}}. \nonumber \end{equation} \end{Lemma}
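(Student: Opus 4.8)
The plan is to reduce the second moment of this Dirichlet polynomial to a counting problem over the ``bad'' set $K^c$, and then show that membership in $K^c$ is a rare event because it forces divisibility by $p^2$ for a prime $p\in[P,Q]$, with $P=e^{(\log X)^{2/3+\epsilon}}$ large. First I would apply Lemma \ref{Lemma23} (the Dirichlet mean value theorem) with the coefficient sequence $a_m = \mathcal{A}_{F}(m,1,\dots,1)1_S(m)1_{K^c}(m)$ supported on $[X,2X]$, which immediately gives
\begin{equation}
\int_{0}^{T} \Big| \sum_{m=X,\, m\in K^{c}}^{2X} \frac{\mathcal{A}_{F}(m,1,...,1)1_{S}(m)}{m^{1+it}}\Big|^{2} dt \ll (T+X) \sum_{m=X,\, m\in K^c}^{2X} \frac{|\mathcal{A}_{F}(m,1,...,1)|^{2}}{m^{2}} \ll \frac{T+X}{X^2}\sum_{m=X,\, m\in K^c}^{2X} |\mathcal{A}_{F}(m,1,...,1)|^{2}.
\end{equation}
Since $(T+X)/X^2 \ll (T/X+1)/X$, it remains to prove $\sum_{m\in K^c\cap[X,2X]} |\mathcal{A}_{F}(m,1,...,1)|^{2} \ll_F X (\log X)^{(n^4-1)/2} P^{-1/2}$, or indeed anything of that shape.

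Next I would exploit the structure of $K^c$: if $m\in K^c$ then $m=p^2 m'$ for some $p\in[P,Q]\cap\mathbf{P}$ and some $m'\in[X/p^2,2X/p^2]$. Using the Hecke bound $|\mathcal{A}_{F}(m,1,\dots,1)| \le |\mathcal{A}_F(p^2,1,\dots,1)|\cdot|\mathcal{A}_F(m',1,\dots,1)|$-type multiplicativity (more precisely $|\mathcal{A}_F(m,1,\dots,1)|\ll d_n(m)$, which is submultiplicative up to constants), Cauchy--Schwarz in the prime variable, and then summing $|\mathcal{A}_F(m',1,\dots,1)|^2 \ll d_n(m')^2$ over $m'$ via Lemma \ref{Shiutheorem} (with the bound $\mathcal{A}_F(p,1,\dots,1)\ll n$, exactly as in \eqref{315}), one obtains
\begin{equation}
\sum_{m\in K^c\cap[X,2X]} |\mathcal{A}_{F}(m,1,...,1)|^{2} \ll_F \sum_{P\le p\le Q} \frac{X}{p^2}(\log X)^{n^4-1} \ll_F X (\log X)^{n^4-1}\cdot\frac{1}{P},
\end{equation}
using $\sum_{p\ge P} p^{-2} \ll P^{-1}$ (even $P^{-1}\log^{-1}P$, but $P^{-1}$ suffices and is cleaner than $P^{-1/2}$). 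This actually beats the claimed $P^{-1/2}(\log X)^{(n^4-1)/2}$, so in the write-up I would either keep the stronger bound or simply note that the weaker stated bound follows a fortiori; the exponent $(n^4-1)/2$ in the statement presumably comes from applying Cauchy--Schwarz one extra time (splitting $|\mathcal{A}_F|^2 = |\mathcal{A}_F|\cdot|\mathcal{A}_F|$ and using $\sum |\mathcal{A}_F(m,1,\dots,1)| \ll X(\log X)^{n^2-1}$ together with a pointwise bound), and I would follow whichever route makes the bookkeeping with the $1_S(m)$ factor most transparent.

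The main obstacle is purely technical rather than conceptual: handling the multiplicativity of $|\mathcal{A}_F(m,1,\dots,1)|$ cleanly when $m=p^2m'$ but $p$ may still divide $m'$, so that $\mathcal{A}_F(m,1,\dots,1)$ is governed by $\mathcal{A}_F(p^{k+2},1,\dots,1)$ rather than a clean product. This is dispatched by Lemma \ref{Lemma 3.1} (each local factor is $O_n(1)$) combined with the global bound \eqref{314}, so that $|\mathcal{A}_F(m,1,\dots,1)| \ll_\epsilon (m')^{\epsilon}\cdot O_n(1)$ on the relevant range and the loss from the prime power is absorbed into the $(\log X)^{O_n(1)}$ factor. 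The only other point requiring care is that the sum over $m'$ must be an interval sum of length comparable to its endpoint so that Lemma \ref{Shiutheorem} applies; this holds since $p\le Q = e^{(\log X)^{1-\epsilon}} = o(X^\delta)$ for every $\delta>0$, hence $X/p^2 \gg X^{1-o(1)}$ and the Shiu-type bound is legitimate with $Y = X/p^2$.
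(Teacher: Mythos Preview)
Your approach is correct and in fact yields a stronger bound than the paper claims. Both you and the paper start identically, applying Lemma~\ref{Lemma23} to reduce to bounding $\sum_{m\in K^c\cap[X,2X]} |\mathcal{A}_F(m,1,\dots,1)|^2$. From there the arguments diverge: the paper simply applies Cauchy--Schwarz in $m$ to separate the coefficients from the support,
\[
\sum_{m\in K^c} |\mathcal{A}_F(m,1,\dots,1)|^2 \le \Big(\sum_{m=X}^{2X} |\mathcal{A}_F(m,1,\dots,1)|^4\Big)^{1/2}\Big(\sum_{m\in K^c} 1\Big)^{1/2},
\]
then invokes \eqref{315} for the fourth moment and the trivial count $|K^c\cap[X,2X]|\ll X/P$ (exactly your union bound over $p^2\mid m$). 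This immediately gives the stated $P^{-1/2}(\log X)^{(n^4-1)/2}$. Your route---writing $m=p^2 m'$, using submultiplicativity of $d_n$ together with Lemma~\ref{Lemma 3.1}, and then applying Shiu to $\sum_{m'} d_n(m')^2$ over $[X/p^2,2X/p^2]$---is slightly more hands-on but buys you a genuine saving: you get $X(\log X)^{n^2-1}/P$ rather than $X(\log X)^{(n^4-1)/2}/P^{1/2}$. The paper's version is shorter because it avoids the local multiplicativity bookkeeping you flagged, at the price of the weaker (but entirely sufficient) exponent. Either argument is perfectly adequate here since $P=e^{(\log X)^{2/3+\epsilon}}$ swamps any fixed power of $\log X$.
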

\begin{proof}
By applying Lemma \ref{Lemma23} and the Cauchy-Schwarz inequality, 
\begin{equation}\label{327} \begin{split}\int_{0}^{T} \bigg| \sum_{m=X \atop m\in K^{c}}^{2X} \frac{\mathcal{A}_{F}(m,1,...,1)1_{S}(m)}{m^{1+it}}&\bigg|^{2} dt \ll \frac{(T+O(X))}{X^{2}} \sum_{m \in K^{c}} |\mathcal{A}_{F}(m,1,...,1)|^{2}1_{S}(m) \\&\ll  \frac{(T+O(X))}{X^{2}} \left(\sum_{m=X}^{2X} |\mathcal{A}_{F}(m,1,...,1)|^{4} \right)^{\frac{1}{2}} \left(\sum_{m \in K^{c}} 1_{S}(m) \right)^{\frac{1}{2}}. \end{split}\end{equation}
By the definition of $K^{c},$ we see that 
\begin{equation}\label{328}\sum_{m \in K^{c}} 1_{S}(m) \ll \sum_{p=P}^{Q} \sum_{m=\frac{X}{p} ,\atop p|m}^{\frac{2X}{p}}  1 
 \ll X \sum_{p=P}^{Q} \frac{1}{p^{2}} \ll \frac{X}{P}.\end{equation}
Therefore, by combining \eqref{315} and \eqref{328}, the last term in \eqref{327} is bounded by 
$$\ll_{F}(T/X+1) \frac{(\log X)^{\frac{n^{4}-1}{2}}}{P^{\frac{1}{2}}}.$$
\end{proof}

\begin{Lemma}\label{Lemma38} Assume $\bf{GRP}$. Let $X$ be sufficiently large, and let $2 \leq H \ll_{\epsilon} X^{1-\epsilon}.$ For $v \in I:=[ H\log P-1, H\log Q],$ define \begin{equation} \begin{split} &\mathcal{N}_{v,H}(s):= \sum_{p=e^{\frac{v}{H}}}^{e^{\frac{v+1}{H}}} \frac{\mathcal{A}_{F}(p,1,...,1)}{p^{s}} \\ &\mathcal{R}_{v,H}(s):= \sum_{m_{1}=Xe^{-\frac{v}{H}}}^{2Xe^{-\frac{v}{H}} } \frac{\mathcal{A}_{F}(m_{1},1,...,1)}{m_{1}^{s}} \frac{1}{|\{q \in [P,Q] \cap \mathbf{P} : q|m_{1}\}|+1}. \end{split} \end{equation} Then
\begin{equation}\begin{split} \int_{e^{(\log X)^{2\epsilon}}}^{T}&\bigg| \sum_{m=X}^{2X}  \frac{\mathcal{A}_{F}(m,1,...,1)1_{S}(m)}{m^{1+it}}\bigg|^{2} dt \ll_{F,\epsilon}  \frac{T+X}{HX}(\log X)^{n^{2}-1} \\&+\frac{T+X}{XP^{\frac{1}{2}}} (\log X)^{n^{4}-1} + H \log \frac{Q}{P} \sum_{j \in I} \int_{e^{(\log X)^{2\epsilon}}}^{T} |\mathcal{N}_{j,H}(1+it)|^{2}|\mathcal{R}_{j,H}(1+it)|^{2}dt.  \end{split}\end{equation} 
\end{Lemma}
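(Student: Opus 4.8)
The plan is to start from the decomposition \eqref{325} together with the factorization \eqref{326} of the $K$-part, and to bound the three contributions separately. First I would isolate the tail: the integral over $t \in [e^{(\log X)^{2\epsilon}},T]$ of $|\sum_{m\in K^c}\cdots|^2$ is handled directly by the previous lemma, giving the term $\frac{T+X}{XP^{1/2}}(\log X)^{n^4-1}$. So it suffices to treat the $K$-part, which by \eqref{326} equals a sum over primes $p\in[P,Q]$ of a Dirichlet polynomial in $p$ times one in $m_1$, with the harmless divisor-counting weight attached to the $m_1$-variable.

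Next I would chop the prime range $[P,Q]$ dyadically in the exponent: write $p\in[e^{v/H},e^{(v+1)/H}]$ for $v$ ranging over the integers in $I=[H\log P-1,H\log Q]$, so there are $\ll H\log(Q/P)$ such blocks. On each block the inner factorization reads $\mathcal{N}_{v,H}(1+it)\cdot\mathcal{R}_{v,H}(1+it)$, except that the $m_1$-sum in \eqref{326} runs over $m_1\in[X/p,2X/p]$ rather than the fixed range $[Xe^{-v/H},2Xe^{-v/H}]$ used in the definition of $\mathcal{R}_{v,H}$; since $p$ varies within a factor $e^{1/H}=1+O(1/H)$ across the block, the discrepancy between these ranges is an error I would absorb. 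The natural tool here is Cauchy--Schwarz in the $v$-sum (the number of blocks is $\ll H\log(Q/P)$, which is exactly the outer factor appearing in the claimed bound) followed by integrating term by term, which produces $H\log\tfrac{Q}{P}\sum_{j\in I}\int_{e^{(\log X)^{2\epsilon}}}^{T}|\mathcal{N}_{j,H}(1+it)|^2|\mathcal{R}_{j,H}(1+it)|^2\,dt$, i.e.\ the main term of the statement.

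It then remains to account for the piece $\frac{T+X}{HX}(\log X)^{n^2-1}$. This should come from the ``diagonal'' contribution that is lost when one replaces the genuine inner sum in \eqref{326} by the clean product $\mathcal{N}_{v,H}\mathcal{R}_{v,H}$ on each block: the mismatch between the $m_1$-ranges, and the error in the divisor weight $\frac{1}{|\{q\mid m_1\}|+1_{(p,m_1)=1}}$ versus $\frac{1}{|\{q\mid m_1\}|+1}$, each generate a short-interval Dirichlet polynomial supported on $m\in[X,2X]$ with coefficients bounded by $|\mathcal{A}_F(m,1,\dots,1)|$ times a block-localized indicator. I would bound these by the Dirichlet mean value theorem (Lemma \ref{Lemma23}), which gives $\ll(T+X)\sum|a_m|^2$; summing the $\ell^2$-norms of the coefficients over the $\ll H\log(Q/P)$ blocks and using $\sum_{m=X}^{2X}|\mathcal{A}_F(m,1,\dots,1)|^2\ll X(\log X)^{n^2-1}$ (a case of Lemma \ref{Shiutheorem} with $g=|\mathcal{A}_F|^2$, $\alpha=n^2$), together with the fact that each fixed $m$ lies in only $O(1)$ blocks once one accounts for the overlap $1/H$, yields the factor $\frac{T+X}{HX}(\log X)^{n^2-1}$ after dividing by $H$ (one $H$ from the length of each prime block, matched against the normalization).

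\textbf{Main obstacle.} The delicate point is the bookkeeping in the second and third steps: correctly quantifying how the variation of $p$ within a dyadic block perturbs the $m_1$-summation range and the divisor weight, and showing that the resulting error Dirichlet polynomials have $\ell^2$ coefficient mass $\ll \frac{1}{H}X(\log X)^{n^2-1}$ rather than merely $\ll X(\log X)^{n^2-1}$ — the saving of one power of $H$ is exactly what makes the first error term of the right size. This is precisely the mechanism of \cite[Lemma 14]{MR3} and \cite[(9)]{MR3}; the only new input is that $\mathcal{A}_F(m,1,\dots,1)$ obeys the divisor-type bound \eqref{314} and the moment bound \eqref{315}, both of which follow from $\bf{GRP}$ and have already been established above, so the argument goes through essentially verbatim.
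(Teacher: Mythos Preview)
Your approach is essentially the same as the paper's: decompose via \eqref{325}--\eqref{326}, collect the main $\sum_{j}\mathcal{N}_{j,H}\mathcal{R}_{j,H}$ piece, apply Cauchy--Schwarz in $j$, and control the remainders by the mean value theorem. Two bookkeeping points are slightly misstated in your sketch, though neither is fatal. First, the divisor-weight discrepancy $1_{(p,m_1)=1}$ versus $1$ only matters when $p\mid m_1$, i.e.\ when $p^2\mid m$; this error (the paper's $c(m)$ term) is supported on a \emph{sparse} set rather than a short interval, and is bounded via Cauchy--Schwarz and the fourth moment \eqref{315} to feed the $\frac{T+X}{XP^{1/2}}(\log X)^{n^4-1}$ term, not the $1/H$ term. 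Second, the $1/H$ saving does not come from ``each $m$ lying in $O(1)$ blocks'' but from the fact that the aggregate range-mismatch error, once summed over blocks \emph{before} any Cauchy--Schwarz, is a single Dirichlet polynomial supported on $m\in[2X,2Xe^{1/H}]$ (the paper's $b(m)$ term), an interval of length $\asymp X/H$; applying Lemma~\ref{Lemma23} and then Shiu (Lemma~\ref{Shiutheorem}) to $\sum_{2X\le m\le 2Xe^{1/H}}|\mathcal{A}_F(m,1,\dots,1)|^2$ gives $\frac{T+X}{HX}(\log X)^{n^2-1}$ directly. With these two adjustments your argument matches the paper's (which cites \cite[Lemma 12]{MR} rather than Lemma~14 of \cite{MR3} for this mechanism).
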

\begin{proof} The proof of this basically follows from \cite[Lemma 12]{MR} . 
By applying \eqref{325}, \eqref{326}, we see that 
\begin{equation} \begin{split}\sum_{m=X}^{2X} \frac{\mathcal{A}_{F}(m,1,...,1)1_{S}(m)}{m^{1+it}}&=
\sum_{j \in I} \mathcal{N}_{j,H}(1+it)\mathcal{R}_{j.H}(1+it)  + \sum_{2X \leq m \leq 2Xe^{\frac{1}{H}}} \frac{b(m)}{m^{1+it}} \\ &+\sum_{m=X,\atop m\in K^{c}}^{2X} \frac{\mathcal{A}_{F}(m,1,...,1)1_{S}(m)}{m^{1+it}} + \sum_{p=P}^{Q} \sum_{m=X, \atop p^{2}|m}^{2X} \frac{c(m)}{m^{1+it}}\end{split} \end{equation}
where $b(m) \ll |\mathcal{A}_{F}(m,1,...,1)|$ and $c(m)\ll |\mathcal{A}_{F}(p,1,..,1) \mathcal{A}_{F}(m_{1},1,..,1)|$ for some $p\in[P,Q], pm_{1}=m.$ $b(n)$ and $c(n)$ arise from  the difference  
$$\sum_{m=X,\atop m\in K}^{2X} \frac{\mathcal{A}_{F}(m,1,...1)1_{S}(m)}{m^{1+it}} - \sum_{j \in I} \mathcal{N}_{j,H}(1+it)\mathcal{R}_{j.H}(1+it).$$
By using Lemma \ref{Shiutheorem}, we have
\begin{equation}\begin{split}\sum_{m=2X}^{2Xe^{\frac{1}{H}}} \left|\frac{\mathcal{A}_{F}(m,1,...,1)}{m}\right|^{2} &\ll_{F,\epsilon}  \frac{e^{\frac{2}{H}}}{X^{2}} \sum_{m=2X}^{2Xe^{\frac{1}{H}}} |\mathcal{A}_{F}(m,1,...,1)|^{2} \\ &\ll_{F,\epsilon} \frac{e^{\frac{2}{H}}}{X^{2}} \frac{X}{H} (\log X)^{n^{2}-1}. \nonumber \end{split}\end{equation}
Therefore, by using Lemma \ref{Lemma23}, we see that 
\begin{equation}\begin{split} \int_{e^{(\log X)^{2\epsilon}}}^{T} \left|  \sum_{m=2X}^{2Xe^{\frac{1}{H}}} \frac{b(m)}{m^{1+it}} \right|^{2}dt &\ll (T+X)  \sum_{m=2X}^{2Xe^{\frac{1}{H}}} \left|\frac{b(m)}{m}\right|^{2} \\ &\ll (T+X)  \sum_{m=2X}^{2Xe^{\frac{1}{H}}} \left|\frac{\mathcal{A}_{F}(m,1,...,1)}{m}\right|^{2} \\ &\ll_{F,\epsilon} \frac{T+X}{HX} \left(\log X\right)^{n^{2}-1}. \end{split} \end{equation}
By using \eqref{315} and Lemma \ref{Lemma23}, we have 
\begin{equation} \begin{split} \int_{e^{(\log X)^{2\epsilon}}}^{T} \left|\sum_{p=P}^{Q} \sum_{m=X, \atop p^{2}|m}^{2X} \frac{c(m)}{m^{1+it}} \right|^{2} dt &\ll \frac{T+X}{X^{2}}  \left( \sum_{m=1, \atop p^{2}| m \textrm{\thinspace \thinspace for \thinspace some \thinspace} p \in [P,Q]}^{2X} \big|c(m)\big|^{2}\right) \\&\ll _{F}
\frac{T+X}{X^{2}} \left(\frac{X}{P}\right)^{\frac{1}{2}}  \left(\sum_{m=1}^{2X}  \big|\mathcal{A}_{F}(m,1,...,1)\big|^{4}\right)^{\frac{1}{2}} \\ & \ll_{F,\epsilon} \frac{T+X}{X^{2}} \frac{X (\log X)^{\frac{n^{4}-1}{2}}}{P^{\frac{1}{2}}}. \end{split}
\end{equation}
Finally, by applying the Cauchy-Schwarz inequality, we see that
\begin{equation} \begin{split}\int_{e^{(\log X)^{2\epsilon}}}^{T} & \Big|\sum_{j \in I} \mathcal{N}_{j,H}(1+it)\mathcal{R}_{j.H}(1+it)  \Big|^{2} dt \\&\ll (H\log Q- H\log P) \sum_{j \in I} \int_{e^{(\log X)^{2\epsilon}}}^{T} |\mathcal{N}_{j,H}(1+it)|^{2}| \mathcal{R}_{j,H}(1+it)|^{2} dt.
\end{split}\end{equation}
\end{proof}
\noindent
Let us consider 
\begin{equation}\label{335} \sum_{j \in I} \int_{e^{(\log X)^{2\epsilon}}}^{T} |\mathcal{N}_{j,H}(1+it)|^{2}|\mathcal{R}_{j,H}(1+it)|^{2} dt. \end{equation}
For $e^{(\log X)^{2\epsilon}}\ll T \ll X,$ by using Lemma \ref{Lemma12}, we have $|N_{j,H}(1+it)| \ll_{F,A} (\log X)^{-A}.$ Therefore,  
\eqref{335} is bounded by $$ (\log X)^{-2A} \sum_{j \in I} \int_{e^{(\log X)^{2\epsilon}}}^{T} |\mathcal{R}_{j,H}(1+it)|^{2}dt$$
for $e^{(\log X)^{2\epsilon}}\ll T \ll X.$
Using Lemma \ref{Lemma22}, Lemma \ref{Lemma23} and \eqref{317}, we see that 
  $$\int_{e^{(\log X)^{2\epsilon}}}^{T} |\mathcal{R}_{j,H}(1+it)|^{2}dt \ll_{F} (T+Xe^{-\frac{v}{H}})/(Xe^{-\frac{v}{H}}).$$
Therefore, \eqref{335} is bounded by 
\begin{equation}\label{336} H\log \frac{Q}{P}(\log X)^{-2A}(\frac{TQ}{X}+1). \end{equation}
\subsection{Proof of Theorem \ref{Theo1}}
\begin{proof}
Let us split the integrand in \eqref{316} by 
\begin{equation} \begin{split}
&\left(\frac{1}{h_{1}} \sum_{m=x}^{x+h_{1}} \mathcal{A}_{F}(m,1,...,1)1_{S}(m) - \frac{1}{h_{2}} \sum_{m=x}^{x+h_{2}} \mathcal{A}_{F}(m,1,...,1)1_{S}(m)\right) \\&+
\left(\frac{1}{h_{1}} \sum_{m=x}^{x+h_{1}} \mathcal{A}_{F}(m,1,...,1)1_{S'}(m) - \frac{1}{h_{2}} \sum_{m=x}^{x+h_{2}} \mathcal{A}_{F}(m,1,...,1)1_{S'}(m)\right).
\end{split}\nonumber \end{equation}
By using Lemma \ref{Lemma 3.6}, we see that  
\begin{equation}\frac{1}{X} \int_{X}^{2X} \left|\frac{1}{h_{1}}\sum_{m=x}^{x+h_{1}} \mathcal{A}_{F}(m,1,...,1)1_{S}(m)- \frac{1}{h_{2}}\sum_{m=x}^{x+h_{2}} \mathcal{A}_{F}(m,1,...,1)1_{S}(m)   \right|^{2}dx \end{equation}
is bounded by

\begin{equation}\label{338} \begin{split} \ll&\max_{i=1,2} \bigg( \int_{e^{(\log X)^{2\epsilon}}}^{Xh_{i}^{-1}} \left|\sum_{m=X}^{2X} \frac{\mathcal{A}_{F}(m,1,...,1)1_{S}(m)}{m^{1+it}}\right|^{2}dt \\&+ \max_{T>Xh_{i}^{-1}} \frac{Xh_{i}^{-1}}{T} \int_{T}^{2T} \left|\sum_{m=X}^{2X} \frac{\mathcal{A}_{F}(m,1,...,1) 1_{S}(m)}{m^{1+it}}\right|^{2}dt\bigg) \\&+(\log X)^{-A}. \end{split} \end{equation}
By applying Lemma \ref{Lemma38}, \eqref{336}, and choosing $H= (\log X)^{\frac{A}{2}},$ we have 
 \begin{equation}\label{339} \begin{split} \int_{e^{(\log X)^{2\epsilon}}}^{Xh_{i}^{-1}} \left|\sum_{m=X, \atop m\in K}^{2X} \frac{\mathcal{A}_{F}(m,1,...,1)1_{S}(m)}{m^{1+it}}\right|^{2}dt &\ll_{F,\epsilon} (\log X)^{1/3-\epsilon-\frac{3A}{2}} \left(\frac{Q}{h_{i}}+1\right)+ \frac{(\log X)^{\frac{n^{4}-1}{2}}}{P^{\frac{1}{2}}} \\&+\frac{(\log X)^{n^{2}-1}}{H} \\ &\ll_{F,\epsilon} (\log X)^{-\frac{A}{2}+ 3} \end{split}  \end{equation} for $i=1,2,$ and for sufficiently large $A.$ 
Using a similar argument as in \eqref{339}, we see that
$$\max_{Xh_{i}^{-1}<T \leq X} \frac{Xh_{i}^{-1}}{T} \int_{T}^{2T} \left|\sum_{m=X, \atop m\in K}^{2X} \frac{\mathcal{A}_{F}(m,1,...,1) 1_{S}(m)}{m^{1+it}}\right|^{2}dt \ll_{F,\epsilon} (\log X)^{-\frac{A}{2}+3} $$
for $i=1,2.$ In addition, by applying \eqref{Rankinselbergappl}, we have 
\begin{equation} \begin{split} \max_{T \geq X } \frac{Xh_{i}^{-1}}{T} \int_{T}^{2T} \left|\sum_{m=X, \atop m\in K}^{2X} \frac{\mathcal{A}_{F}(m,1,...,1) 1_{S}(m)}{m^{1+it}}\right|^{2}dt &\ll_{F,\epsilon}  \max_{T \geq X } \frac{h_{i}^{-1}}{T} \frac{T+X}{X} \sum_{m=X}^{2X}  |\mathcal{A}_{F}(m,1,...,1) |^{2} \\&\ll_{F,\epsilon} h_{i}^{-1}\end{split} \nonumber \end{equation} for $i=1,2.$
Therefore, \eqref{338} is bounded by \begin{equation}\label{340} (\log X)^{-\frac{A}{3}}.\end{equation}
By using Lemma \ref{Lemma34}, we see that 
\begin{equation}\label{341}\begin{split}\frac{1}{X} &\int_{X}^{2X} \left|\frac{1}{h_{1}}\sum_{m=x}^{x+h_{1}} \mathcal{A}_{F}(m,1,...,1)1_{S'}(m)- \frac{1}{h_{2}}\sum_{m=x}^{x+h_{2}} \mathcal{A}_{F}(m,1,...,1)1_{S'}(m)   \right|^{2}dx  \\&\ll_{F,\epsilon} (\log X)^{-2/3+\epsilon}.\end{split}\end{equation}
Combining \eqref{340} and \eqref{341}, we get \eqref{316}.
\end{proof}
\subsection{Proof of Theorem \ref{Theorem 3.10}}
\begin{proof}
By using the Chebyshev inequality and Theorem \ref{Theo1}, we see that 
$$\frac{1}{h}\sum_{m=x}^{x+h} \mathcal{A}_{F}(m,1,...,1)- \frac{1}{X^{1-\epsilon}}\sum_{m=x}^{x+X^{1-\epsilon}} \mathcal{A}_{F}(m,1,...,1) \ll_{F,\epsilon} (\log X)^{-1/3+\epsilon}$$ 
for almost all $x \in [X,2X].$
By \eqref{342}, we have \begin{equation} \frac{1}{X^{1-\epsilon}}\sum_{m=x}^{x+X^{1-\epsilon}} \mathcal{A}_{F}(m,1,...,1) \ll_{F,\epsilon} X^{\frac{n^{2}-n}{n^{2}+1}-1+\epsilon} \ll_{F,\epsilon}  (\log X)^{-1/3+\epsilon}.   \nonumber \end{equation} 
\end{proof}
\subsection{Proof of Corollary \ref{Cor2}}
\noindent In this subsection, we prove Corollary \ref{Cor2} by applying Theorem \ref{Theo1}.
\begin{proof} For convenience we assume that $X \in \mathbb{N}.$
Let us consider \begin{equation}\label{344} \frac{1}{X}\sum_{x=X}^{2X} \Big|\frac{1}{h} \sum_{m=x}^{x+h} \mathcal{A}_{F}(m,1,...,1)\Big|^{2}.\end{equation}
By squaring out the inner sums and applying the upper bound $\mathcal{A}_{F}(m,1,...,1) \ll_{\epsilon} m^{\epsilon},$ 
\eqref{344} can be represented as 
\begin{equation}  \frac{1}{Xh^{2}}\sum_{l=-h \atop l \neq 0}^{h} |h-l|  \left(\sum_{m=X}^{2X}\mathcal{A}_{F}(m,1,...,1) \overline{\mathcal{A}_{F}(m+l,1,...,1)} \right) + O_{F}\left(\frac{1}{h}\right)+O_{F,\epsilon}\left(hX^{\frac{\epsilon}{2}-1}\right). \nonumber  \end{equation}
By applying Theorem \ref{Theo1} and \eqref{342}, we see that \eqref{344} is bounded by 
$$\ll_{F,\epsilon} (\log X)^{-2/3+\epsilon}.$$
Since $\frac{1}{h}, hX^{\frac{\epsilon}{2}-1} \ll_{\epsilon} (\log X)^{-2/3+\epsilon},$ the proof is completed.
\end{proof}
\subsection{Proof of Corollary \ref{Cor16}}
\begin{proof}
Let $h'=e^{(\log X)^{1-\epsilon}}.$
By using \eqref{1122}, we have
\begin{equation}
\left|\{ x \in [\frac{X}{2},X-h') : \sum_{m=x}^{x+h'} |\mathcal{A}_{F}(m,1,...,1)|> \frac{h' (\log X)^{\frac{1}{n}-1}}{\log \log X}\}\right| \gg X.
\end{equation}
By combining the above inequality with Remark \ref{Remark15}, we see that 
\begin{equation}\label{Cancellation} \sum_{m=x}^{x+h'} \mathcal{A}_{F}(m,1,...,1) \ll_{F,\epsilon} h' (\log X)^{\theta+\epsilon-1} =o_{F,\epsilon} \left(\frac{h' (\log X)^{\frac{1}{n}-1}}{(\log \log X)^{2}}\right) =o_{F,\epsilon} \left(\sum_{m=x}^{x+h'} |\mathcal{A}_{F}(m,1,...,1)|\right)
\end{equation} 
for a positive proportion of $x \in [\frac{X}{2},X-e^{(\log X)^{1-\epsilon}}).$ 
Therefore, there are $x\leq m_{1} <m_{2} \leq x+h'$ such that 
$\mathcal{A}_{F}(m_{1},1,...,1)\mathcal{A}_{F}(m_{2},1,...,1)<0$ 
for a positive proportion of $x \in [\frac{X}{2},X-e^{(\log X)^{1-\epsilon}}).$ 
Hence, the number of sign changes is $\gg_{F,\epsilon} \frac{X}{h'^{2}}.$
\end{proof}
\section{Acknowledgements}
\noindent The author would like to thank his advisor Professor Xiaoqing Li for her helpful advice on zero-free regions and the generalized Ramanujan-Petersson conjecture. The author wishes to express his thanks to Professor Andrew Granville, Professor Kaisa Matomaki, and the anonymous referee for their helpful comments.

\bibliographystyle{plain}   
\bibliography{personal1}  
\end{document}